\def \RR {\mathbb R}
\def \EE {\mathbb E}
\def \ZZ {\mathbb Z}
\def \CC {\mathbb C}
\def \PP {\mathbb P}
\def \eps {\varepsilon}
\def \vphi {\varphi}
\def \cF {\mathcal F}
\def \cN {\mathcal N}
\def \cS {\mathcal S}
\def \cJ {\mathcal J}
\newtheorem{theorem}{Theorem}[section]
\newtheorem{lemma}[theorem]{Lemma}
\newtheorem{corollary}[theorem]{Corollary}
\theoremstyle{definition}
\def\myffrac#1#2 in #3{\raise 2.6pt\hbox{$#3 #1$}\mkern-1.5mu\raise 0.8pt\hbox{$
#3/$}\mkern-1.1mu\lower 1.5pt\hbox{$#3 #2$}}
\begin{document}

\title{Variations on the Berry-Esseen theorem}
\author{Bo'az Klartag\textsuperscript{1} and Sasha
Sodin\textsuperscript{2}}

\footnotetext[1]{Supported in part by the Israel Science Foundation
and by a Marie Curie Reintegration Grant from the Commission of the
European Communities.} \footnotetext[2]{Supported in part by the
Adams Fellowship Program of the Israel Academy of Sciences and
Humanities and by the Israel Science Foundation.}

\date{}
\maketitle \abstract{ Suppose that $X_1,\ldots,X_n$ are independent,
identically-distributed random variables of mean zero and variance
one. Assume that $\EE |X_1|^4 \leq \delta^4$. We observe that there
exist many choices of coefficients $\theta_1,\ldots,\theta_n \in
\RR$ with $\sum_j \theta_j^2 = 1$ for which
 \begin{equation}
 \sup_{\alpha, \beta \in \RR \atop{\alpha < \beta}}
 \left| \PP \left( \alpha \leq \sum_{j=1}^n \theta_j X_j \leq \beta \right) \, -
\, \frac{1}{\sqrt{2 \pi}} \int_{\alpha}^{\beta} e^{-t^2/2}dt \right|
\leq \frac{C \delta^4}{n}, \label{eq_1417}
\end{equation}
where  $C> 0$ is a universal constant.
 Inequality (\ref{eq_1417})
should be compared with the classical
 Berry-Esseen theorem, according to which
 the left-hand side of (\ref{eq_1417}) may decay with $n$ at the slower rate of $O( 1/
 \sqrt{n})$,
 for the unit vector $\theta = (1,\ldots,1) / \sqrt{n}$.  An explicit, universal example for coefficients $\theta = (\theta_1,\ldots,\theta_n)$ for which
(\ref{eq_1417}) holds is
\[ \theta = (1,
\sqrt{2}, -1, -\sqrt{2}, 1, \sqrt{2}, -1, -\sqrt{2}, \cdots)
    \, \Big/ \, \sqrt{3n/2} ~ \]
when $n$ is divisible by four. Parts of the argument are applicable
also in the more general case, in which $X_1,\ldots,X_n$ are
independent random variables of mean zero and variance one, yet they
are not necessarily identically distributed. In this general
setting, the bound (\ref{eq_1417}) holds with $\delta^4 = n^{-1}
\sum_{j=1}^n \EE |X_j|^4$ for most selections of a unit vector
$\theta = (\theta_1,\ldots,\theta_n) \in \RR^n$. Here ``most''
refers to the uniform probability measure
 on the unit sphere. }

\section{Introduction}
\label{sec1}

This note brings further evidence for the fundamental r\^ole played
by the geometry of the high-dimensional sphere in the analysis of
the central limit theorem, in the spirit of works by Sudakov
\cite{S}, Diaconis and Freedman \cite{DF} and others. Suppose that
$X_1,\ldots,X_n$ are independent random variables with finite third
moments such that $\EE X_j = 0$ and $\EE X_j^2 = 1$ for all $j$. The
classical Berry-Esseen theorem (see, e.g., Feller \cite[Vol. II,
Chapter XVI]{feller}) states that
\begin{equation}
 \sup_{\alpha, \beta \in \RR \atop{\alpha < \beta}} \left| \PP
\left( \alpha \leq \frac{1}{\sqrt{n}} \sum_{j=1}^n X_j  \leq \beta
\right) \, - \, \frac{1}{\sqrt{2 \pi}} \int_{\alpha}^{\beta}
e^{-t^2/2}dt \right| \leq \frac{C \gamma^3}{\sqrt{n}}
\label{eq_1445}
\end{equation}
where $\gamma = \left( \sum_j \EE |X_j|^3 / n \right)^{1/3} \leq
\max_j (\EE |X_j|^3)^{1/3}$ and $C > 0$ is a universal constant. In
the general case, where $X_1,\ldots,X_n$ are
 non-symmetric random variables, the bound
(\ref{eq_1445}) is sharp. Even when $X_1,\ldots,X_n$ are symmetric
random variables, the bound (\ref{eq_1445}) may not be improved in
general: If the random variables are symmetric Bernoulli variables,
for instance, then the probability $\PP(\sum_j X_j = 0)$ is
approximately $\left( \pi n / 2 \right)^{-1/2}$ for large even $n$.
Therefore (\ref{eq_1445}) is an asymptotically optimal bound in this
case, up to the value of the constant $C$.

\medskip Quite unexpectedly, we find that there exists a linear combination
of the random variables $X_1,\ldots,X_n$ that is much closer to the
standard gaussian distribution.  As it
turns out, selecting the coefficients of the linear combination in a
probabilistic fashion may significantly improve the rate of
convergence to the gaussian distribution. We denote  $S^{n-1} = \{
(x_1,\ldots,x_n) \in \RR^n ; \sum_i x_i^2 = 1 \}$, the unit sphere
in $\RR^n$. Let $\sigma_{n-1}$ be the unique rotationally-invariant
probability measure on $S^{n-1}$, referred to as the uniform
distribution on the sphere. Whenever we say that a random vector is
distributed uniformly on the sphere, we mean that it is distributed
according to $\sigma_{n-1}$. The coefficients of the linear
combination will be selected randomly, uniformly over the sphere.

\begin{theorem} Let $n \geq 1$ be an integer, $0 < \rho < 1$.
Suppose that $X_1,\ldots,X_n$ are independent random variables with
finite fourth moments, such that $\EE X_j =0$ and $\EE X_j^2 = 1$ for $j=1,\ldots,n$.
Denote
$$ \delta = \left( \frac{1}{n} \sum_{j=1}^n \EE X_j^4
\right)^{1/4}. $$  Then, there exists a subset $\cF \subseteq
S^{n-1}$ with $\sigma_{n-1}(\cF) \geq 1 - \rho$ for which the
following holds: For any $\theta = (\theta_1,\ldots,\theta_n) \in
\cF$, \begin{equation} \sup_{\alpha, \beta \in \RR \atop{\alpha <
\beta}} \left| \PP \left( \alpha \leq \sum_{j=1}^n \theta_j X_j \leq
\beta \right) \, - \, \frac{1}{\sqrt{2 \pi}} \int_{\alpha}^{\beta}
e^{-t^2/2}dt \right| \leq \frac{C(\rho) \delta^4}{n} \label{eq_1412}
\end{equation} where $C(\rho)$ is a constant depending solely on
$\rho$. In fact, $C(\rho) \leq C \log^2 \left( 1/\rho \right)$,
where $C
> 0$ is a universal constant. \label{thm_1517}
\end{theorem}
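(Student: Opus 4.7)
My plan is the classical Fourier-analytic one. Let $\vphi_j(s) = \EE e^{isX_j}$ be the characteristic function of $X_j$ and $\vphi_\theta(t) = \prod_j \vphi_j(\theta_j t)$ that of $S_\theta = \sum_j \theta_j X_j$. I would begin by invoking Esseen's smoothing lemma, which bounds the Kolmogorov distance between the distribution of $S_\theta$ and the standard Gaussian by
\[
\int_{-T}^T \frac{|\vphi_\theta(t)-e^{-t^2/2}|}{|t|}\,dt + \frac{C}{T}.
\]
Choosing $T$ of order $n/\delta^4$ makes the boundary term $C/T$ already of the required order $\delta^4/n$, so the task reduces to controlling the integrand by an integrable quantity of size $O(\delta^4/n)$ for $\theta$ in a subset $\cF\subseteq S^{n-1}$ of measure at least $1-\rho$.

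Next, for $|t|$ not too large, I would use the cumulant expansion
\[
\log \vphi_\theta(t) + \frac{t^2}{2} = \sum_{j=1}^n\Bigl(\log\vphi_j(\theta_j t) + \frac{(\theta_j t)^2}{2}\Bigr) = \sum_{k\geq 3}\frac{(it)^k}{k!}A_k(\theta), \qquad A_k(\theta) = \sum_{j=1}^n \kappa_k^{(j)}\theta_j^k,
\]
where $\kappa_k^{(j)}$ is the $k$-th cumulant of $X_j$. The key point is that when $\theta$ is uniform on $S^{n-1}$, each $A_k(\theta)$ is very small. Using the representation $\theta = g/|g|$ with $g$ a standard Gaussian vector in $\RR^n$, one has $\EE \theta_j^k \sim n^{-k/2}$ (and it vanishes for odd $k$), so $|\EE A_k(\theta)|\lesssim \bar\kappa_k / n^{k/2-1}$, while concentration of measure on the sphere gives sub-Gaussian fluctuations of $A_k$ on the scale $\sqrt{\sum_j(\kappa_k^{(j)})^2}/n^{k/2}$. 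The dominant correction is the $k=4$ term, yielding $|A_4(\theta)|\lesssim \delta^4/n$ on a set of large measure (using that $\EE X_j^4 \geq 1$). Plugging this back via $|e^z-1| \leq |z|e^{|z|}$ with $z = \log\vphi_\theta(t)+t^2/2$ and integrating $t^4 e^{-t^2/2}/|t|$ over $\RR$ produces the desired $\delta^4/n$ estimate in the small-$t$ regime.

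For moderately large $|t|$ the cumulant expansion stops being useful and I would switch to a direct bound on $|\vphi_\theta(t)|$ using $|\vphi_j(s)|^2 \leq 1 - cs^2 + Cs^4 \EE X_j^4$ for small $s$. For typical $\theta$ on the sphere, $\max_j|\theta_j| = O(\sqrt{(\log n)/n})$, so $\theta_j t$ remains in the range where this quadratic bound is effective over a long interval of $t$, and $\prod_j |\vphi_j(\theta_j t)|$ decays like $e^{-ct^2}$, contributing negligibly. The $\log^2(1/\rho)$ factor would arise from a union bound over the handful of concentration events needed to control the $A_k$ and the relevant geometric quantities on $S^{n-1}$: each bound costs a $\sqrt{\log(1/\rho)}$ factor, which enters squared into the estimate of $|\vphi_\theta(t) - e^{-t^2/2}|$. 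The main obstacle is controlling the higher cumulant terms $A_k(\theta)$ for $k\geq 5$ uniformly and with the correct $n$ dependence, and balancing the cumulant-expansion regime against the large-$|t|$ regime so that all contributions collapse into $C\log^2(1/\rho)\delta^4/n$; Hanson--Wright- or hypercontractivity-type estimates for polynomials on $S^{n-1}$ would be the natural tools for this.
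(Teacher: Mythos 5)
Your outline matches the paper's first two stages --- Esseen's smoothing inequality with cutoff $T \sim n/\delta^4$, and a Taylor/cumulant expansion of $\log \vphi_\theta$ controlling the third- and fourth-order terms $\sum_j \kappa_k^{(j)}\theta_j^k$ for typical $\theta$ --- and the paper likewise obtains the $\log^2(1/\rho)$ factor from a heavy-tailed ($\psi_{1/2}$) deviation bound for $\sum_j \delta_j^4 \Theta_j^4$, consistent with your sketch. However, the proposal has a genuine gap precisely in the regime $\sqrt n \lesssim |t| \lesssim n/\delta^4$, which is the part of the argument that makes the $O(1/n)$ rate possible.

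The quadratic bound $|\vphi_j(s)|^2 \le 1 - c s^2 + C s^4 \EE X_j^4$ is only valid for $|s|$ below a threshold of order $(\EE X_j^4)^{-1/4}$; for typical $\theta$ with $\max_j|\theta_j|\sim\sqrt{\log n / n}$ this controls $|\vphi_\theta(t)|$ only up to $|t|\sim\sqrt{n/\log n}$, which is far short of $T\sim n/\delta^4$. If one truncated Esseen's lemma at $T\sim\sqrt n$, the boundary term $C/T$ alone would already give only $O(1/\sqrt n)$, i.e.\ the classical Berry--Esseen rate. Moreover, for lattice-valued $X_j$ the function $|\vphi_j|$ returns to values arbitrarily close to $1$ at large arguments, so no pointwise decay of $|\vphi_\theta(t)|$ holds uniformly; it is genuinely a property of \emph{typical} $\theta$ that the product $\prod_j|\vphi_j(\theta_j t)|$ is small in this range, and this does not follow from $\ell^\infty$-control of $\theta$. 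The paper handles this regime by an entirely different mechanism that is absent from your plan: it computes $\EE_\Theta\,|\vphi_j(\tau\Theta_j)|^2 = \EE\, J_n(\tau Y_j)$ (with $Y_j = X_j - \tilde X_j$ and $J_n$ the Fourier transform of the marginal density of one spherical coordinate, essentially a Bessel function), shows $J_n(\xi)\le 1 - c\min\{\xi^2/n,1\}$, and then uses the Carlen--Lieb--Loss inequality $\EE\prod_j f_j(\Theta_j) \le \prod_j(\EE f_j(\Theta_j)^2)^{1/2}$ to decouple the product over $j$. This gives $\EE_\Theta|\vphi_\Theta(\xi)| \le \exp(-c\min\{\xi^2, n/\delta^4\})$, and Chebyshev's inequality upgrades the average bound to a high-probability pointwise one. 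Without an argument of this type, the range $\sqrt n \lesssim |t| \lesssim n/\delta^4$ is uncontrolled and the proof does not close.

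A secondary issue: relying on the full cumulant series $\sum_{k\ge 3} (it)^k A_k(\theta)/k!$ requires controlling $A_k(\theta)$ for all $k$ and, more fundamentally, requires the series for $\log\vphi_j(\theta_j t)$ to converge, which is only guaranteed for $|\theta_j t|$ small. The paper instead uses finite Taylor expansion with a fourth-order remainder $O(\delta_j^4 s^4)$, valid without analyticity assumptions, restricted to $|\xi|\le(\sum_j\delta_j^4\theta_j^4)^{-1/6}$; this avoids the infinite-cumulant problem you flag as ``the main obstacle.''
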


 A case of interest is when $X_1,X_2,\ldots$ is an infinite
sequence of independent, identically-distributed random variables of
mean zero and variance one, with finite fourth moment. In this
case, Theorem \ref{thm_1517} provides a convergence to the gaussian
distribution, with rate of convergence of the order $O(1 / n)$, for
appropriate (or random) choice of linear combinations.

\medskip The case where $X_1,\ldots,X_n$ are
identically-distributed, independent random variables is quite
remarkable here. In this case the subset $\cF \subseteq S^{n-1}$
from Theorem \ref{thm_1517} can be described explicitly, and it does
not depend on the distribution of the random variables. Following
Rudelson and Vershynin \cite{RV}, we make use of arithmetic
properties of the vector $\theta$. Define
\[ d(x, \ZZ) = \min_{p \in \ZZ} |p - x|~, \quad \quad (x \in \RR)~, \]
and for $\theta = (\theta_1,\ldots,\theta_n) \in \RR^n$ set,
\[ d(\theta, \ZZ^n) = \sqrt{\sum_{j=1}^n d^2(\theta_j, \ZZ)}. \]
Given $\theta \in S^{n-1}$ we denote by $\cN(\theta)$ the minimal  $R \geq 1$ for which the following three conditions
hold:
\begin{enumerate}
\item[(i)] $\displaystyle \left| \sum_{j=1}^n \theta_j^3 \right| \leq R /
n$.
\item[(ii)] $\displaystyle  \sum_{j=1}^n \theta_j^4  \leq R / n$.
\item[(iii)] For any $|\xi| \leq n$,
$$
 d(\xi \theta, \ZZ^n) \geq \frac{1}{10} \min \left \{ |\xi|,  \frac{n / R}{|\xi|} \right \}. $$
(the number $10$ does not play any special r\^ole)
\end{enumerate}
 The three conditions above are satisfied by
many unit vectors in the unit sphere, and are not very difficult to
verify in certain examples. In order to appreciate the third condition, observe
that for a typical unit vector $\theta \in S^{n-1}$ we have
$$ d(\xi \theta, \ZZ^n) \geq \min \{ |\xi|, c \sqrt{n}  \} \quad \quad \text{for any} \ |\xi| \leq e^{c n}, $$ where $c > 0$ is a universal constant. For a concrete example,  consider the
unit vector
\begin{equation}
 \theta^0 = (1,
\sqrt{2}, -1, -\sqrt{2}, 1, \sqrt{2}, -1, -\sqrt{2}, \cdots)
    \, \Big/ \, \sqrt{3n/2}~ \label{eq_1412_}
\end{equation}
for $n$ divisible by four.  For this unit vector, the sum in
 (i) is zero, whereas (ii) clearly holds for any $R \geq 2$. The
third condition is verified in Lemma~\ref{l:sqrt2}, hence
$$ \cN(\theta^0) \leq C $$ for a universal constant $C > 0$.

\begin{theorem}\label{thm_1328}
 Suppose that $X_1,\ldots,X_n$ are independent,
identically-distributed  random variables with finite fourth
moments, such that $$ \EE X_1 =0, \ \ \ \EE X_1^2 = 1 \ \ \ \ \
\text{and} \ \ \ \ \EE X_1^4 \leq \delta^4. $$ Then, for any $\theta
\in S^{n-1}$,
\begin{equation}
 \sup_{\alpha, \beta \in \RR \atop{\alpha < \beta}} \left| \PP
\left( \alpha \leq \sum_{j=1}^n \theta_j X_j \leq \beta \right) \, -
\, \frac{1}{\sqrt{2 \pi}} \int_{\alpha}^{\beta} e^{-t^2/2}dt \right|
\leq  \frac{C \cN(\theta) \delta^4}{n}, \label{eq_908}
\end{equation} where $C
> 0$ is a universal constant.
\end{theorem}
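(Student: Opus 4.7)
The plan is to apply Esseen's smoothing inequality
$$ \sup_\alpha |F_S(\alpha) - \Phi(\alpha)| \leq \frac{C}{T} + \int_{-T}^{T}\frac{|f(t)-e^{-t^2/2}|}{|t|}\,dt $$
to $S=\sum_j \theta_j X_j$, where $f(t)=\prod_j \phi(\theta_j t)$, $\phi$ is the characteristic function of $X_1$, and $\Phi$ is the standard Gaussian distribution function. Taking $T$ of order $n/(\cN(\theta)\delta^4)$ handles the $1/T$ term and matches the target rate. I would split the remaining integral at a cutoff $T_0$ of order $n^{1/4}/(\cN(\theta)^{1/4}\delta^2)$, chosen so that $|\theta_j t| \leq c/\delta^2$ for every $j$ whenever $|t|\le T_0$; this uses $\max_j|\theta_j|\le(\sum_j \theta_j^4)^{1/4}\le(\cN(\theta)/n)^{1/4}$ from condition (ii).

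In the Taylor region $|t|\le T_0$, an elementary truncation argument based on $\EE X_1^2 = 1$ and $\EE X_1^4\le\delta^4$ first gives $|\phi(s)|\le e^{-cs^2}$ for $|s|\le c/\delta^2$, hence $|f(t)|\le e^{-ct^2}$ throughout this range. A fourth-order Taylor expansion of $\log\phi$ then yields
$$\log f(t) = -\frac{t^2}{2} + \frac{it^3 m_3}{6}\sum_j\theta_j^3 + \frac{t^4\kappa_4}{24}\sum_j\theta_j^4 + E(t),$$
where $m_3=\EE X_1^3$, $\kappa_4=\EE X_1^4-3$, and $E(t)$ collects the quintic and higher corrections. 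Since $|m_3|\le\delta^3$, $|\kappa_4|\le C\delta^4$, and H\"older's inequality gives $\sum_j\theta_j^k\le(\cN(\theta)/n)^{(k-2)/4}$ for $k\ge 5$, the tail $E$ is negligible. Inserting conditions (i) and (ii) and using $\delta\ge 1$, this region contributes at most $C\cN(\theta)\delta^4/n$ to the Esseen integral.

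For $T_0<|t|\le T$ the Gaussian $e^{-t^2/2}$ is already much smaller than the target error, so it suffices to bound $\int|f(t)|/|t|\,dt$. Starting from the symmetrization identity $|\phi(s)|^2=\EE\cos(s(X_1-X_1'))$ and exploiting that $X_1-X_1'$ has variance $2$ and fourth moment $\le C\delta^4$, I would prove a two-regime pointwise bound of the form
$$|\phi(s)|^2\le\exp\bigl(-c\min(s^2,\,d^2(s/\pi,\ZZ))\bigr).$$
Multiplying over $j$ and applying condition (iii) with $\xi=t/\pi$ (legitimate because $T\le n$) yields
$$|f(t)|^2\le\exp\bigl(-c\min(t^2,\,n^2/(\cN(\theta)^2 t^2))\bigr),$$
and a direct computation shows $\int_{T_0\le|t|\le T}|f(t)|/|t|\,dt\le C\cN(\theta)\delta^4/n$.

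The principal obstacle is the pointwise estimate on $|\phi(s)|$ used in the large-$t$ regime. It must simultaneously handle two very different behaviours of $X_1$: for a ``continuous'' distribution $|\phi|$ decays unconditionally and condition (iii) is overkill, whereas for a Bernoulli-type distribution the only source of smallness of $|\phi|$ is the arithmetic separation from $\pi\ZZ$ encoded in (iii). Matching the constants across these regimes, so that after the product over $j$ and the application of (iii) the transition at $|t|\asymp\sqrt{n/\cN(\theta)}$ between the two branches of the $\min$ produces precisely the rate $\cN(\theta)\delta^4/n$, is the most delicate part of the argument.
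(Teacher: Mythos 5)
Your general framework---Esseen smoothing with $T\asymp n/(\cN(\theta)\delta^4)$, a Taylor region, and an outer region governed by the arithmetic condition (iii)---matches the paper, and your treatment of the Taylor region is essentially right (the paper uses the cutoff $(n/(R\delta^4))^{1/6}=\eps^{-2/3}$ rather than your $T_0$, but both work, and conditions (i), (ii) enter in the same way). The outer region, however, has two genuine gaps.

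\emph{First, the claimed pointwise bound is false.} You propose $|\vphi(s)|^2\le\exp\bigl(-c\min(s^2,\,d^2(s/\pi,\ZZ))\bigr)$ with $c>0$ universal (or even merely $\delta$-dependent). Since $d(s/\pi,\ZZ)\le|s|/\pi$, the minimum is always the second argument, so the claim reduces to $|\vphi(s)|^2\le\exp(-c\,d^2(s/\pi,\ZZ))$. This cannot hold: take $X$ two-valued with $X=4(1-p)$ w.p.\ $p$ and $X=-4p$ w.p.\ $1-p$, where $p(1-p)=1/16$, so that $\EE X=0$, $\EE X^2=1$. Then $Y=X-X'$ is supported on $\{0,\pm4\}\subset 4\ZZ$, hence $|\vphi(\pi/2)|^2=\EE\cos(\tfrac{\pi}{2}Y)=1$ while $d(\tfrac12,\ZZ)=\tfrac12$, forcing $c=0$. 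The scale at which $|\vphi|$ fails to decay is the reciprocal lattice of $Y$, not $\pi\ZZ$. The paper's Lemma~\ref{lem_1101} therefore keeps $Y$ \emph{inside} the distance function: $|\vphi(s)|^2\le\exp\bigl(-8\,\EE\,d^2(sY/(2\pi),\ZZ)\bigr)$, and after multiplying over $j$ one gets $|\vphi_\theta(\xi)|\le\exp(-4\EE\,d^2(\xi Y\theta/(2\pi),\ZZ^n))$. It is then the random scaling $Y$ that makes condition (iii) bite: one truncates to $\tilde Y=Y\,1_{\{|Y|\le 5\delta^2\}}$ (where $\EE\tilde Y^2\ge 4/5$ by Lemma~\ref{lem_859}), and the $\delta^2$ in the truncation is exactly where the extra $\delta^4$ in the lower bound $S(\xi)\ge c\min\{|\xi|, (n/(R\delta^4))/|\xi|\}$ of Lemma~\ref{lem_1055} comes from. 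Your version has no mechanism to produce that $\delta^4$.

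\emph{Second, even granting a bound of the stated form, the ``direct computation'' fails.} If $|f(t)|^2\le\exp\bigl(-c\min(t^2,(n/R)^2/t^2)\bigr)$ on $T_0\le|t|\le T=n/(R\delta^4)$, then on $\sqrt{n/R}\le|t|\le T$ the exponent is $(n/R)^2/t^2$, and substituting $u=(n/R)/t$ gives $\int_{\delta^4}^{\sqrt{n/R}}e^{-cu^2}\,du/u$, which is $O(1)$ (and even $O(\log)$ for $\delta\approx 1$), not $O(R\delta^4/n)$. A pointwise lower bound of the form $f(\xi)\ge\alpha\min(|\xi|,T/|\xi|)$ alone only yields an $O(1)$ integral; the $O(1/T)$ rate in the paper is obtained in Lemma~\ref{lem_1103} precisely by combining this lower bound with the \emph{subadditivity} $f(\xi_1+\xi_2)\le f(\xi_1)+f(\xi_2)$, which controls the measure of the sublevel sets $\{r\le f<2r\}$ (a Rudelson--Vershynin / Friedland--Sodin style Littlewood--Offord argument). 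The function $S(\xi)=\sqrt{\EE\,d^2(\xi Y\theta/(2\pi),\ZZ^n)}$ has this subadditivity because $d(\cdot,\ZZ^n)$ does; your proposal never invokes it. Both the averaging over the symmetrization $Y$ and the subadditivity-based integral lemma are essential and missing.
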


Once formulated, Theorem \ref{thm_1517} and Theorem \ref{thm_1328}
require nothing but an adaptation of the proofs of the classical
quantitative bounds in the central limit theorem. The following
pages contain the details of the argument. Section \ref{sec2} serves
mostly as a remainder for the proof of the Berry-Esseen bound using
the Fourier transform. In Section \ref{sec3} and Section \ref{sec4}
we exploit the randomness involved in the selection of
$\theta_1,\ldots,\theta_n$ in Theorem \ref{thm_1517}. Section
\ref{sec5} is devoted to the proof of Theorem \ref{thm_1328}.

\medskip Throughout this text, the letters $c, \tilde{c}, c^{\prime},
C, \tilde{C}, \bar{C}$ etc. stand for various positive universal
constants, whose value may change from one line to the next. We
usually use upper-case $C$ to denote universal constants that we
think of as ``sufficiently large'', and lower-case $c$ to denote
universal constants that are ``sufficiently small''. The notation
$O(x)$, for some expression $x$, is an abbreviation for some
complicated quantity $y$ with the property that $ |y| \leq C x $ for
some universal constant $C > 0$. A standard Gaussian random variable
is a random variable whose density is $t \mapsto (2 \pi)^{-1/2}
\exp(-t^2/2)$ on the real line.

\medskip
{\bf Acknowledgement.} We would like to thank Shahar Mendelson for
his help on the subject of  Bernstein-type inequalities in the
absence of exponential moments.

\section{The Fourier inversion formula}
\label{sec2}

Throughout this note, $X_1,\ldots,X_n$ are
independent random variables with finite fourth moments such that
$\EE X_j = 0$ and $\EE X_j^2 = 1$ for all $j$. Denote
$$ \gamma_j = \left( \EE X_j^3 \right)^{1/3}, \ \ \ \
\bar{\gamma}_j =  \left( \EE |X_j|^3 \right)^{1/3}, \ \ \ \ \delta_j
= \left( \EE X_j^4 \right)^{1/4} \ \ \ \ \ \ \ \ \ \ \ (1 \leq j
\leq n). $$ Note that $\gamma_j$ may be negative, since $\EE X_j^3$
does not have a definite sign, and the third root of a negative
number is negative. To help the reader remember the r\^{o}les of the
Greek letters, we confess right away that $\gamma$, the third letter
in the Greek alphabet, represents third moments, while $\delta$, the
fourth letter in the Greek alphabet, represents fourth moments. We
also set
$$ \delta = \left( \frac{1}{n} \sum_{j=1}^n \delta_j^4
\right)^{1/4}.
$$
According to the Cauchy-Schwartz inequality, $\bar{\gamma}_j^3 = \EE
|X_j|^3 \leq \sqrt{\EE X_j^4 \EE X_j^2} = \delta_j^2$. Hence,
\begin{equation}
 |\gamma_j| \leq  \bar{\gamma}_j \leq \delta_j^{2/3}, \ \bar{\gamma}_j\geq 1, \ \delta_j \geq 1 \ \ \ \ \ \ \ \ \ \ \ \text{for} \ j=1,\ldots,n.
\label{eq_1633}
\end{equation}
 Consider the Fourier transform
$$ \vphi_j(\xi) = \EE \exp \left(-i \xi X_j \right), \ \ \ \ \ \ \ \ \ (\xi \in \RR, 1 \leq j \leq n) $$
where $i^2 = -1$. Clearly $|\vphi_j(\xi)| \leq 1$ for any $\xi \in
\RR$. The $k^{th}$ derivative of $\vphi_j$ is
$$ \vphi_j^{(k)}(\xi) =
(-i)^k \EE X_j^k \exp(-i \xi X_j) $$ for any $\xi \in \RR, 1 \leq j
\leq n$ and $0 \leq k \leq 4$. Consequently,
\begin{equation}
 \vphi_j(0) = 1, \ \ \vphi_j^{\prime}(0) = 0, \ \ \vphi_j^{\prime \prime}(0) = -1, \ \
\vphi_j^{(3)}(0) = i \gamma_j^3 \label{eq_942}
\end{equation}
for all $j$, and
\begin{equation}
|\vphi_j^{(3)}(\xi)| \leq \bar{\gamma}_j^3, \ \ \
|\vphi_j^{(4)}(\xi)| \leq \delta_j^4 \ \ \ \ \ \ \ \ \ \ \ \text{for
all} \ \xi \in \RR, 1 \leq j \leq n. \label{eq_943_}
\end{equation}
For a unit vector $\theta = (\theta_1,\ldots,\theta_n) \in S^{n-1}$,
by independence,
$$ \vphi_{\theta}(\xi) := \EE \exp \left(-i \xi \sum_{j=1}^n \theta_j X_j \right) =
\prod_{i=1}^n \EE \exp \left(-i \xi \theta_j X_j \right) =
\prod_{j=1}^n \vphi_j( \theta_j \xi). $$ Denote, for $\theta \in
S^{n-1}$,
$$ F_{\theta}(t) = \PP \left( \sum_{j=1}^n \theta_j X_j \leq  t \right), \ \
 \ \ \ \ \Phi(t) = \frac{1}{\sqrt{2 \pi}} \int_{-\infty}^t e^{-s^2/2} ds \ \ \ \ \ \ \ \ \ (t \in \RR). $$
 Recall that when
$\Gamma$ is a standard Gaussian random variable, $\EE \exp(-i \xi
\Gamma) = \exp(-\xi^2/2)$. In order to control $\sup_t
|F_{\theta}(t) - \Phi(t)|$ it is customary to try and bound the
difference of the Fourier transforms $|\vphi_{\theta}(\xi) -
\exp(-\xi^2/2)|$ for $\xi$ in a large enough interval. According to
Lemma 2 in \cite[Vol. II, Section XVI.3]{feller}, whose proof is
based on a simple smoothing technique,
\begin{equation}
 \sup_{t \in \RR} |F_{\theta}(t) - \Phi(t)| \leq C \int_{-T}^T \frac{|\vphi_{\theta}(\xi) -
\exp(-\xi^2/2)|}{|\xi|} d \xi + \frac{C}{T}, \label{eq_958}
\end{equation}
for any $T > 0$, where $C > 0$ is a universal constant.

\medskip It is important to mention that for large classes of
probability distributions, the error term $C / T$ in (\ref{eq_958})
is non-optimal, and may be improved upon to $C / T^2$ in some cases.
See, for instance, Lemma 9 in \cite{ptrf} for an improvement of this
nature pertaining to even, log-concave distributions. We are
dealing, however, with arbitrary random variables, hence we must
rely on the bound (\ref{eq_958}). Thus, in order to prove Theorem
\ref{thm_1517}, we need to establish
\begin{equation}
\int_{-n/\delta^4}^{n/\delta^4} \frac{|\vphi_{\theta}(\xi) -
\exp(-\xi^2/2)|}{|\xi|} d \xi \leq \frac{ C(\rho) \delta^4}{n}
\label{eq_1617}
\end{equation}
for all $\theta \in \cF$ where $\cF$ is a certain subset of the
sphere with $\sigma_{n-1}(\cF) \geq 1- \rho$. The rest of this paper
is devoted to the proof of (\ref{eq_1617}) and of the analogous
inequality in the context of Theorem \ref{thm_1328}. We divide the
domain of integration in (\ref{eq_1617}) into three parts. The
contribution of two of these domains is analyzed in the following
two lemmas.

\begin{lemma} Let $\theta = (\theta_1,\ldots,\theta_n) \in S^{n-1}$. Denote $\eps = \left( \sum_{j=1}^n \theta_j^4 \delta_j^4 \right)^{1/4}$
and $R_1 = \left| \sum_{j=1}^n \gamma_j^3 \theta_j^3 \right|$.
Suppose that $\eps \leq 1$. Then,
$$ \int_{-\eps^{-2/3}}^{\eps^{-2/3}} \left| \vphi_{\theta}(\xi) -
e^{-\xi^2/2} \right| \frac{d \xi}{|\xi|} \leq C \left[ R_1 + \eps^4 \right]
$$
where $C >0$ is a universal constant. \label{lem_dom1}
\end{lemma}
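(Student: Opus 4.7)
The plan is to Taylor-expand $\log \vphi_\theta(\xi) = \sum_j \log \vphi_j(\theta_j \xi)$ to fourth order in $\xi$ and then exponentiate. From (\ref{eq_942}) and (\ref{eq_943_}), Taylor's theorem with remainder gives
\[
 \vphi_j(\eta) = 1 - \frac{\eta^2}{2} - \frac{i \gamma_j^3 \eta^3}{6} + O\bigl(\delta_j^4 \eta^4\bigr)
\]
uniformly in $\eta \in \RR$. The preliminary observation on which the argument rests is a pointwise bound on the coefficients: since $\delta_j \ge 1$ by (\ref{eq_1633}), one has $(\theta_j \delta_j)^4 \le \sum_k \theta_k^4 \delta_k^4 = \eps^4$, so $|\theta_j| \le |\theta_j \delta_j| \le \eps$. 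Consequently, on the integration range $|\xi| \le \eps^{-2/3}$, both $|\theta_j \xi|$ and $|\theta_j \delta_j \xi|$ are at most $\eps^{1/3} \le 1$.

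This uniform smallness guarantees $|\vphi_j(\theta_j \xi) - 1| \le 1/2$, so $\log \vphi_j(\theta_j \xi)$ is unambiguously defined, and $\log(1+z) = z - z^2/2 + O(|z|^3)$ can be applied factor by factor. Using $|\gamma_j|^3 \le \delta_j^2$ from (\ref{eq_1633}) together with $|\theta_j \delta_j \xi|\le 1$, every higher-order contribution is absorbed into $O(\delta_j^4 \theta_j^4 \xi^4)$. Summing and invoking $\sum_j \theta_j^2 = 1$ yields
\[
 \log \vphi_\theta(\xi) = -\frac{\xi^2}{2} - \frac{i \xi^3 A}{6} + O\bigl(\eps^4 \xi^4\bigr), \qquad A := \sum_{j=1}^n \gamma_j^3 \theta_j^3,
\]
with $|A| = R_1$. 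Equivalently, $\vphi_\theta(\xi) = e^{-\xi^2/2} e^{w(\xi)}$ where $w(\xi) = -i \xi^3 A / 6 + O(\eps^4 \xi^4)$.

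For the concluding step I would observe that Cauchy-Schwarz combined with $|\gamma_j|^3 \le \delta_j^2$ gives $R_1 \le \bigl(\sum_j \delta_j^4 \theta_j^4\bigr)^{1/2}\bigl(\sum_j \theta_j^2\bigr)^{1/2} = \eps^2$. Hence for $|\xi| \le \eps^{-2/3}$ both $|\xi^3 A| \le \eps^{-2} R_1 \le 1$ and $\eps^4 \xi^4 \le \eps^{4/3} \le 1$, so $|w(\xi)|$ is bounded and $|e^{w(\xi)} - 1| \le C |w(\xi)|$. Therefore
\[
 \frac{|\vphi_\theta(\xi) - e^{-\xi^2/2}|}{|\xi|} \le C\, e^{-\xi^2/2}\bigl( \xi^2 R_1 + |\xi|^3 \eps^4 \bigr),
\]
and integrating against the Gaussian weight over $\RR$ produces the desired $O(R_1 + \eps^4)$. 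The main obstacle is precisely the pointwise estimate $|\theta_j \delta_j| \le \eps$: without it, individual $|\theta_j \xi|$ could be much larger than $1$, and the term-by-term Taylor/logarithm expansion of $\vphi_\theta$ would fail. Once this observation is in hand, the remainder of the argument is a routine fourth-order expansion followed by a standard Gaussian integration.
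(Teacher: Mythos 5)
Your proof is correct and follows essentially the same route as the paper: Taylor-expand each $\log\vphi_j(\theta_j\xi)$ to fourth order (justified by the pointwise smallness $|\theta_j\delta_j|\le\eps$, which the paper also uses, in the form $|\theta_j\xi|\le|\theta_j|\eps^{-1}\le\delta_j^{-1}$), sum using $\sum_j\theta_j^2=1$, bound $R_1\le\eps^2$ by Cauchy--Schwarz to keep $w(\xi)$ bounded on the integration range, and finish by integrating $e^{-\xi^2/2}(R_1|\xi|^3+\eps^4\xi^4)/|\xi|$ over $\RR$. One cosmetic slip: from (\ref{eq_942}) the cubic Taylor coefficient is $+i\gamma_j^3/6$, not $-i\gamma_j^3/6$, though this has no effect since only $|R_1|$ enters the estimate.
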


\begin{proof} Recall (\ref{eq_942}) and (\ref{eq_943_}). Taylor's theorem implies that for any $j=1,\ldots,n$ and $s \in \RR$,
$$ \left| \vphi_j(s) - \left[ 1 - \frac{1}{2} s^2 + \frac{i \gamma_j^3}{6} s^3 \right] \right|
\leq \frac{\delta_j^4}{24} s^4. $$ Recall that $\max \{ 1,
|\gamma_j| \} \leq \delta_j^{2/3} \leq \delta_j$ according to (\ref{eq_1633}). In
particular,
$$ |\vphi_j(s) - 1| \leq 3/4 \ \ \ \ \ \ \ \  \ \ \ \ \ \ \text{for } \ |s| \leq \delta_j^{-1}, \ j=1,\ldots,n. $$
Thus $\log \vphi_j(s)$ is well-defined for $|s| \leq \delta_j^{-1}$,
and for any $|s| \leq \delta_j^{-1}$ and $j=1,\ldots,n$,
\begin{equation}
 \log \vphi_j(s) = -\frac{1}{2} s^2 + \frac{i \gamma_j^3}{6} s^3 + O(\delta_j^4 s^4)
\label{eq_906}
\end{equation}
since  $| \log (1 + z) - z | \leq 8 |z|^2$ whenever  $|z| \leq 3/4$.
 Note that
for any $\xi \in \RR$ with $|\xi| \leq \eps^{-2/3}$,
$$ |\theta_j \xi| \leq |\theta_j| \eps^{-2/3} \leq |\theta_j| \eps^{-1} \leq \delta_j^{-1}. $$
Summing (\ref{eq_906}) over $j=1,\ldots,n$, we conclude that for any
$|\xi| \leq \eps^{-2/3}$,
\begin{equation}
 \sum_{j=1}^n \log \vphi_j(\theta_j \xi) =
-\frac{\xi^2}{2} + \frac{i \sum_{j=1}^n \gamma_j^3 \theta_j^3 }{6}
\xi^3 + O \left( \eps^4 \xi^4 \right) \label{eq_915}
\end{equation}
as $\sum_j  \theta_j^2 = 1$ and $\eps^4 = \sum_{j=1}^n \delta_j^4
\theta_j^4$. Recall that $|\gamma_j|^3 \leq \delta_j^2$. By the
Cauchy-Schwartz inequality,
\begin{equation}
 R_1 = \left| \sum_{j=1}^n \gamma_j^3 \theta_j^3
 \right| \leq \sum_{j=1}^n \delta_j^2 |\theta_j| \theta_j^2
 \leq \left( \sum_{j=1}^n \delta_j^4 |\theta_j|^2 \theta_j^2
 \right)^{1/2} = \eps^2. \label{eq_2210}
\end{equation}
Hence $R_1 |\xi|^3 + \eps^4 \xi^4
\leq 2$ for all $|\xi| \leq \eps^{-2/3}$.
From
(\ref{eq_915}) we learn that for any $|\xi| \leq \eps^{-2/3}$,
$$ e^{\xi^2/2} \left| \vphi_{\theta}(\xi) - e^{-\xi^2/2} \right| =
\left| e^{\xi^2/2} \prod_{j=1}^n \vphi_j \left( \theta_j \xi \right)
- 1 \right| = \left| e^{O(R_1 |\xi|^3 + \eps^4 \xi^4)} - 1 \right|
\leq C^{\prime} \left[ R_1 |\xi|^3 + \eps^4 \xi^4 \right].
 $$
 We integrate the above, to conclude that
$$ \int_{-\eps^{-2/3}}^{\eps^{-2/3}}
\left| \vphi_{\theta}(\xi) -
e^{-\xi^2/2} \right| \frac{d \xi}{|\xi|} \leq
 \int_{-\infty}^{\infty}C^{\prime} \left[ R_1 |\xi|^3 + \eps^4 \xi^4 \right]  e^{-\xi^2/2} \frac{d\xi}{|\xi|} \leq \tilde{C} \left[ R_1 + \eps^4 \right]. $$
\end{proof}

\begin{lemma} Let $\theta = (\theta_1,\ldots,\theta_n) \in S^{n-1}$. Denote, as before,
$\eps = \left( \sum_{j=1}^n \theta_j^4 \delta_j^4 \right)^{1/4}$,
and suppose that $R_2 > 0$ satisfies
\begin{equation}
 \sum_{j \in \cS}  \theta_j^2 \geq 1/8 \ \ \ \ \ \ \ \ \text{where} \ \ \ \ \ \cS =
\left \{ 1 \leq j\leq n \ ; \
  |\theta_j| \leq   R_2 / \bar{\gamma}_j^{3} \right \}.
 \label{eq_1152}
 \end{equation}
Then, whenever $\eps^{-2/3} \leq c R_2^{-1}$,
$$ \int_{\eps^{-2/3}}^{c R_2^{-1}}  \left| \vphi_{\theta}(\xi) -
e^{-\xi^2/2} \right| \frac{d \xi}{|\xi|} \leq C \eps^4. $$ The right-hand side is also an upper bound for the integral from $-c R_2^{-1}$
to $-\eps^{-2/3}$. Here $C, c
> 0$ are universal constants. \label{lem_dom2}
\end{lemma}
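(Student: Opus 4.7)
The plan is to prove that on the interval $\eps^{-2/3} \leq |\xi| \leq c R_2^{-1}$ the characteristic function $\vphi_\theta$ itself enjoys Gaussian decay, $|\vphi_\theta(\xi)| \leq \exp(-c_1 \xi^2)$. The integrand $|\vphi_\theta(\xi) - e^{-\xi^2/2}|/|\xi|$ is then majorized by $2\exp(-c_2 \xi^2)/|\xi|$, and the integral is controlled by the super-exponentially small factor $\exp(-c_2 \eps^{-4/3})$ coming from the lower endpoint, which easily absorbs the polynomial target $\eps^4$. The negative-$\xi$ range is handled identically via $|\vphi_\theta(-\xi)| = |\vphi_\theta(\xi)|$.

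The crux is a pointwise estimate on $|\vphi_j(\theta_j \xi)|$ for $j \in \cS$. By the defining condition of $\cS$ together with $|\xi| \leq c R_2^{-1}$,
$$|\theta_j \xi| \leq \frac{R_2}{\bar\gamma_j^3} \cdot \frac{c}{R_2} = \frac{c}{\bar\gamma_j^3} \leq c,$$
where the last inequality uses $\bar\gamma_j \geq 1$ from (\ref{eq_1633}). Taylor's theorem combined with (\ref{eq_942}) and (\ref{eq_943_}) gives
$$|\vphi_j(s)| \leq 1 - \frac{s^2}{2} + \frac{\bar\gamma_j^3\,|s|^3}{6} \qquad (|s| \leq 1).$$
With $s = \theta_j \xi$, the cubic remainder rewrites as $\tfrac{1}{6} (\bar\gamma_j^3 |\theta_j| \cdot |\xi|)\,\theta_j^2 \xi^2 \leq \tfrac{c}{6}\,\theta_j^2 \xi^2$, since $\bar\gamma_j^3 |\theta_j| \leq R_2$ for $j \in \cS$. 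For $c$ small enough the cubic is strictly dominated by the quadratic, giving
$$|\vphi_j(\theta_j \xi)| \leq 1 - c_0\,\theta_j^2 \xi^2 \leq \exp(-c_0\,\theta_j^2 \xi^2).$$

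For $j \notin \cS$ the trivial bound $|\vphi_j| \leq 1$ suffices. Multiplying and invoking the hypothesis (\ref{eq_1152}),
$$|\vphi_\theta(\xi)| \leq \exp\!\left(-c_0 \xi^2 \sum_{j \in \cS}\theta_j^2\right) \leq \exp(-c_1 \xi^2),$$
so $|\vphi_\theta(\xi) - e^{-\xi^2/2}| \leq 2\exp(-c_2 \xi^2)$ with $c_2 = \min(c_1, 1/2)$. The Mills-type tail estimate $\int_a^\infty e^{-c_2 \xi^2}\,d\xi \leq (2 c_2 a)^{-1} e^{-c_2 a^2}$ with $a = \eps^{-2/3}$ then yields
$$\int_{\eps^{-2/3}}^{c R_2^{-1}} \frac{|\vphi_\theta(\xi) - e^{-\xi^2/2}|}{|\xi|}\,d\xi \leq C\,\eps^{4/3}\, e^{-c_2 \eps^{-4/3}}.$$
Writing $u := \eps^{-4/3}$, the right-hand side equals $C\,\eps^4 \cdot u^2 e^{-c_2 u}$; since $u^2 e^{-c_2 u}$ is bounded on $[0,\infty)$, the integral is $\leq C' \eps^4$, as required.

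The only delicate point is the calibration of the universal constant $c$ in the upper endpoint $c R_2^{-1}$: it must be chosen small enough, relative to the absolute constants coming from Taylor's theorem, that the cubic remainder surrenders a definite fraction of the quadratic term, producing a strictly positive $c_0$. Once this is fixed, the multiplicative step, the Gaussian tail estimate, and the polynomial-versus-exponential comparison are all routine.
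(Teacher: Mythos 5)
Your proof is correct and follows essentially the same strategy as the paper's: use the Taylor expansion of $\vphi_j$ together with the defining condition of $\cS$ to get the Gaussian decay $|\vphi_\theta(\xi)| \leq \exp(-c_1\xi^2)$ on $[\eps^{-2/3}, cR_2^{-1}]$, then integrate the tail and compare the resulting exponentially small quantity with $\eps^4$. The only cosmetic difference is that the paper passes through $\log\vphi_j$ to establish the intermediate bound $|\vphi_j(s)|\leq e^{-s^2/4}$ for $|s|\leq c/\bar\gamma_j^3$, whereas you obtain $|\vphi_j(\theta_j\xi)|\leq 1-c_0\theta_j^2\xi^2$ directly from the Taylor inequality; both lead to the same conclusion.
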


\begin{proof} As in the beginning of the proof of Lemma
\ref{lem_dom1}, we use Taylor's theorem. We conclude that for
$j=1,\ldots,n$,
$$ |\log \vphi_j(s) + \frac{1}{2} s^2| \leq C \bar{\gamma}_j^3 |s|^3 \ \ \ \ \
\text{when} \ \ |s| \leq 1 / \bar{\gamma}_j. $$
 Hence, for $j=1,\ldots,n$ and $s \in \RR$,
\begin{equation}
 |\vphi_j(s)| \leq \exp \left( - s^2 / 4 \right) \ \ \ \ \ \ \text{when} \ \ |s| \leq c / \bar{\gamma}_j^{3}. \label{eq_902}
\end{equation}
Let $\xi \in \RR$ be such that $|\xi| \leq c R_2^{-1}$ where $c$ is
the constant from (\ref{eq_902}). For any $j \in \cS$, we have
$|\theta_j \xi| \leq c / \bar{\gamma}_j^{3}$. Therefore,
$$ |\vphi_{\theta}(\xi)| = \prod_{j=1}^n |\vphi_j(\theta_j \xi)| \leq \prod_{j \in \cS} |\vphi_j(\theta_j \xi)|
\leq \exp \left( -\sum_{j \in \cS} \theta_j^2 \xi^2 / 4 \right) \leq
\exp \left(-\tilde{c} \xi^2 \right)
$$
where the last inequality follows from (\ref{eq_1152}). Consequently,
$$ \int_{\eps^{-2/3}}^{c
R_2^{-1}} \left| \vphi_\theta( \xi) - e^{-\xi^2/2} \right| \frac{d
\xi}{|\xi|} \leq \int_{\eps^{-2/3}}^{c R_2^{-1}} \left[
e^{-\tilde{c} \xi^2} + e^{-\xi^2 /2} \right] \frac{d \xi}{|\xi|}
\leq C \eps^{2/3} e^{-c / \eps^{4/3}} \leq \bar{C} \eps^4. $$
\end{proof}

\section{Properties of a random direction}
\label{sec3}

We retain the notation of the previous section, and our first goal
is to estimate $R_2$ from Lemma \ref{lem_dom2}. The following lemma
serves that purpose. For a random variable $Y$ and $a \in \RR$ we
write $1_{\{Y > a\}}$ for the random variable that equals one when
$Y > a$ and vanishes otherwise.

\begin{lemma} Let $M \geq 1$ and suppose that $Y$ is a non-negative random variable
with $\EE Y = 1$ and $\EE Y^2 \leq M$. Then,
\begin{enumerate}
\item[(i)] $\displaystyle \PP \left( Y \geq 1/2 \right) \geq 1 / (4M),$
\item[(ii)] $ \displaystyle \EE Y 1_{\{ Y \leq 5 M \}} \geq 4/5.$
\end{enumerate} \label{lem_859}
\end{lemma}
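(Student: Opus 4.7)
The lemma is a straightforward pair of moment inequalities, essentially in the spirit of the Paley--Zygmund inequality, and the main tool in both parts will be the Cauchy--Schwarz inequality applied to the random variable $Y$ restricted to a well-chosen event.

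For part (i), the plan is to split the expectation as
\[
1 = \EE Y = \EE Y \mathbf{1}_{\{Y < 1/2\}} + \EE Y \mathbf{1}_{\{Y \geq 1/2\}}.
\]
The first summand is bounded above by $1/2$ since $Y \geq 0$ and we are integrating against an indicator. For the second summand I would apply Cauchy--Schwarz to obtain
\[
\EE Y \mathbf{1}_{\{Y \geq 1/2\}} \leq \sqrt{\EE Y^2} \cdot \sqrt{\PP(Y \geq 1/2)} \leq \sqrt{M \cdot \PP(Y \geq 1/2)}.
\]
Combining the two estimates yields $1/2 \leq \sqrt{M \cdot \PP(Y \geq 1/2)}$, and rearranging gives the claimed lower bound $\PP(Y \geq 1/2) \geq 1/(4M)$.

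For part (ii), the strategy is to show that the complementary expectation $\EE Y \mathbf{1}_{\{Y > 5M\}}$ is at most $1/5$. First I would use Markov's inequality applied to $Y^2$ to get
\[
\PP(Y > 5M) = \PP(Y^2 > 25 M^2) \leq \frac{\EE Y^2}{25 M^2} \leq \frac{1}{25 M}.
\]
Then, by Cauchy--Schwarz,
\[
\EE Y \mathbf{1}_{\{Y > 5M\}} \leq \sqrt{\EE Y^2} \cdot \sqrt{\PP(Y > 5M)} \leq \sqrt{M} \cdot \sqrt{\frac{1}{25 M}} = \frac{1}{5}.
\]
Since $\EE Y = 1$, it follows that $\EE Y \mathbf{1}_{\{Y \leq 5M\}} \geq 1 - 1/5 = 4/5$, as required.

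There is no real obstacle here: both parts reduce to a single application of Cauchy--Schwarz after a suitable preparatory step (a direct split in (i), a Markov estimate on the tail in (ii)). The only minor care needed is to pick the threshold $5M$ (rather than a smaller constant multiple of $M$) so that the Markov bound on $\PP(Y > 5M)$ combined with $\sqrt{\EE Y^2} \leq \sqrt{M}$ produces a constant small enough, namely $1/5$, to deliver the clean conclusion $4/5$.
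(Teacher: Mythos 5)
Your proof is correct, and part (i) is exactly the Paley--Zygmund argument that the paper simply cites (to Kahane) rather than reproducing. For part (ii) you take a slightly different route from the paper: the paper uses the single pointwise inequality $Y\,\mathbf{1}_{\{Y>5M\}} \leq Y^2/(5M)$, giving $\EE Y\,\mathbf{1}_{\{Y>5M\}} \leq \EE Y^2/(5M) \leq 1/5$ in one step, whereas you first apply Markov's inequality to $Y^2$ to bound $\PP(Y>5M)$ and then invoke Cauchy--Schwarz. Both yield the same constant $1/5$; the paper's version is a one-liner and avoids the square root, while yours is marginally longer but equally elementary and perhaps more mechanical to discover. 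Either way the conclusion $\EE Y\,\mathbf{1}_{\{Y\leq 5M\}} \geq 4/5$ follows.
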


\begin{proof} The first inequality is due to Paley and Zygmund
(see, e.g., Kahane \cite[Section 1.6]{Kahane}). To prove (ii),
observe that
$$  \EE 1_{ \{ Y \leq 5 M \}} Y =
1- \EE 1_{ \{ Y > 5 M \}} Y \geq 1 - \frac{1}{5 M} \EE Y^2 \geq 4/5.
$$ \end{proof}

The rest of this section and the next section are devoted to the proof of Theorem \ref{thm_1517}.
Readers interested only in the proof of Theorem \ref{thm_1328} may proceed to Section \ref{sec5}.
 Suppose that $\Theta =
(\Theta_1,\ldots,\Theta_n)$ is a random vector, distributed
uniformly on the unit sphere $S^{n-1}$.

\begin{lemma} Let $\cJ \subseteq \{1,\ldots,n\}$ be a subset, denote its cardinality by $k =
\#(\cJ)$, and assume that $k \geq 4 n / 5$. Then with probability
greater than $1 - C \exp(- cn)$ of selecting the random vector
$\Theta \in S^{n-1}$,
$$ \sum_{j \in \cS}  \Theta_j^2 \geq 1/8 \ \ \ \ \ \ \ \ \text{where} \
\ \ \ \ \cS = \{ j \in \cJ;
  |\Theta_j| \leq  40 / \sqrt{n} \}. $$
Here, $C, c > 0$ are universal constants. \label{lem_prob}
\end{lemma}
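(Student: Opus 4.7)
\medskip\noindent\textbf{Proof proposal.} I would use the standard Gaussian representation of the uniform measure on the sphere: write $\Theta = G / \|G\|$, where $G = (G_1, \ldots, G_n)$ has i.i.d.\ standard Gaussian coordinates. Then $\Theta_j^2 = G_j^2 / \|G\|^2$, and the condition $|\Theta_j| \leq 40/\sqrt{n}$ becomes $G_j^2 \leq 1600 \|G\|^2/n$. Two independent probabilistic inputs then control everything: concentration of $\|G\|^2$, and concentration of an appropriately truncated sum of $G_j^2$.

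First, by the standard chi-squared tail bound (or Bernstein applied to the i.i.d.\ sub-exponential variables $G_j^2 - 1$), one has $\PP(\|G\|^2 \notin [n/2,\, 2n]) \leq C e^{-cn}$. On the event $\{\|G\|^2 \geq n/2\}$, the threshold $1600\|G\|^2/n$ is at least $800$, so any $j \in \cJ$ with $G_j^2 \leq 15$ automatically lies in $\cS$. Thus on this event,
\[
\sum_{j \in \cS} \Theta_j^2 \;\geq\; \frac{1}{\|G\|^2} \sum_{j \in \cJ} \xi_j, \qquad \text{where} \quad \xi_j := G_j^2 \,\mathbf{1}_{\{G_j^2 \leq 15\}}.
\]

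Second, I apply Lemma~\ref{lem_859}(ii) with $Y = G_j^2$: here $\EE Y = 1$ and $\EE Y^2 = 3$, so choosing $M = 3$ yields $\EE \xi_j = \EE G_j^2 \mathbf{1}_{\{G_j^2 \leq 15\}} \geq 4/5$. The random variables $\{\xi_j\}_{j \in \cJ}$ are i.i.d., bounded by $15$, and have mean $\geq 4/5$. Hoeffding's inequality therefore gives
\[
\PP\!\left( \sum_{j \in \cJ} \xi_j < \tfrac{7}{10} k \right) \leq \exp(-c k) \leq \exp(-c' n),
\]
using $k \geq 4n/5$. On this event, combined with $\|G\|^2 \leq 2n$, the ratio in the previous display is at least $(7k/10)/(2n) \geq (7/10)(4n/5)/(2n) = 7/25 > 1/8$, which is the desired conclusion.

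The main obstacle is essentially bookkeeping: tracking constants so that $40/\sqrt{n}$ is comfortably larger than the Gaussian truncation level $\sqrt{15}$ (after rescaling by $\|G\|/\sqrt{n}$), and verifying that the lower bound $k \geq 4n/5$ propagates through Hoeffding to give a genuinely exponential deviation estimate rather than a sub-Gaussian one at scale $\sqrt{n}$. Apart from this, the argument is a straightforward synthesis of $\chi^2$ concentration, Hoeffding's inequality, and the Paley--Zygmund-type truncation bound already supplied by Lemma~\ref{lem_859}.
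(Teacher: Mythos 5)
Your proof is correct and follows essentially the same route as the paper: represent the uniform point on $S^{n-1}$ via Gaussians, use chi-squared concentration for the norm, apply Lemma~\ref{lem_859}(ii) to lower-bound the truncated second moment, and finish with a Bernstein/Hoeffding bound for the sum of truncated Gaussian squares. One small remark: the paper's version introduces an auxiliary chi-squared variable $Z$ with $k$ degrees of freedom and asserts that $(\sqrt{Z}\,\Theta_j)_{j \in \cJ}$ has the same joint law as $(\Gamma_j)_{j\in\cJ}$, but that identity in fact requires $Z$ to have $n$ degrees of freedom (exactly your $\|G\|^2$), so your $\Theta = G/\|G\|$ formulation is the cleaner one; the constants work out either way.
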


\begin{proof} Let us introduce independent, standard gaussian
random variables $ \Gamma_j \ \ (j \in \cJ)$, that are independent
of the $\Theta_j$'s. Let $Z$ be a chi-square random variable with $k
= \#(\cJ)$ degrees of freedom, independent of the $\Gamma_j$'s and
the $\Theta_j$'s. Then $Z$ has the same distribution as $\sum_{j \in
\cJ} \Gamma_j^2$. Bernstein's inequality (see, e.g., Ibragimov and
Linnik \cite[Chapter 7]{IL}) yields
\begin{equation}
 \PP \left( \frac{k}{2} \leq Z \leq 2 k \right) \geq 1 - C \exp(-c k) \geq 1 - \tilde{C} \exp(-\tilde{c} n).
\label{eq_1128} \end{equation}
 Observe that  the random variables $(\Gamma_j)_{j \in \cJ}$ have
exactly the same joint distribution as the random variables
$(\sqrt{Z} \Theta_j)_{j \in \cJ}$. Therefore, in order to prove the
lemma, it suffices to show that with probability greater than $1 - C
\exp(- cn)$,
$$ \sum_{j \in \cS}  \Gamma_j^2 \geq n/2 \ \ \ \ \ \ \ \ \text{where} \
\ \ \ \ \cS = \{ j \in \cJ;
  |\Gamma_j| \leq  20 \}. $$
Denote $Y_j= \Gamma_j^2 1_{\{|\Gamma_j| \leq 20\}}$. Then $(Y_j)_{j
\in \cJ}$ are independent, identically-distributed random variables,
and our goal is to prove that
\begin{equation}
 \PP \left( \sum_{j \in \cJ} Y_j \geq n/2 \right) \geq 1 - C \exp(-
cn). \label{eq_923}
\end{equation}
Since $\EE \Gamma_j^4 = 3$, then Lemma \ref{lem_859}(ii) yields that
$$ \EE Y_j \geq 4/5, \ \ \ \ \ \ \ \ \text{and clearly} \ \ \ \ \ \ \ Var(Y_j)
\leq \EE Y_j^2 \leq \EE \Gamma_j^2 = 3 $$  for $j \in \cJ$.
According to Bernstein's inequality, \begin{equation} \PP \left(
\sum_{j \in \cJ} Y_j \leq \frac{4 k}{5} - t \sqrt{3 k} \right) \leq
C \exp(-c t^2) \ \ \ \ \ \ \ \ \text{for any} \ t \geq 0.
\label{eq_1305} \end{equation} Recall that $k / n \geq 4/5$.
Inequality (\ref{eq_923}) follows by setting $t = \sqrt{n / 200}$ in
(\ref{eq_1305}). \end{proof}

\begin{corollary} Set $R = 200 \delta^2 / \sqrt{n}$.
Then with probability greater than $1 - C \exp(- cn)$ of selecting
the random vector $\Theta \in S^{n-1}$,
$$ \sum_{j \in \cS}  \Theta_j^2 \geq 1/8 \ \ \ \ \ \ \ \ \text{where} \
\ \ \ \ \cS = \{ 1 \leq j \leq n;
  |\Theta_j| \leq  R / \bar{\gamma}_j^{3} \}. $$
Here, $C, c > 0$ are universal constants. \label{cor_prob}
\end{corollary}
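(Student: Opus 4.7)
The plan is to reduce the corollary to Lemma~\ref{lem_prob} by choosing, in a deterministic manner, a subset $\cJ \subseteq \{1,\ldots,n\}$ of cardinality at least $4n/5$ on which $\bar{\gamma}_j$ is under control. On such indices, the threshold $40/\sqrt{n}$ supplied by Lemma~\ref{lem_prob} will automatically be stronger than the threshold $R/\bar{\gamma}_j^3$ appearing in the definition of $\cS$, so that the set produced by the lemma sits inside $\cS$.

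First I would set
\[
\cJ \,=\, \bigl\{\, 1 \leq j \leq n \ ; \ \delta_j^4 \leq 5 \delta^4 \,\bigr\}.
\]
Since $\sum_{j=1}^n \delta_j^4 = n \delta^4$, a one-line application of Markov's inequality yields $\#(\cJ) \geq 4n/5$, verifying the hypothesis of Lemma~\ref{lem_prob}. Next, using the inequality $\bar{\gamma}_j^3 \leq \delta_j^2$ from (\ref{eq_1633}), for every $j \in \cJ$ I get $\bar{\gamma}_j^3 \leq \sqrt{5}\, \delta^2$ and therefore
\[
\frac{R}{\bar{\gamma}_j^3} \,=\, \frac{200\, \delta^2}{\sqrt{n}\, \bar{\gamma}_j^3} \,\geq\, \frac{200}{\sqrt{5}\, \sqrt{n}} \,>\, \frac{40}{\sqrt{n}}.
\]
Consequently $\{\, j \in \cJ \ ; \ |\Theta_j| \leq 40/\sqrt{n} \,\} \subseteq \cS$.

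Invoking Lemma~\ref{lem_prob} with this specific $\cJ$ then gives, with probability at least $1 - C \exp(-cn)$,
\[
\sum_{j \in \cS} \Theta_j^2 \,\geq\, \sum_{\substack{j \in \cJ \\ |\Theta_j| \leq 40/\sqrt{n}}} \Theta_j^2 \,\geq\, \frac{1}{8},
\]
which is the required inequality.

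I do not foresee any serious obstacle: the entire argument reduces to making the numerical constants compatible. The threshold $5\delta^4$ in the definition of $\cJ$ has to be small enough for the inclusion into $\cS$ to hold, and yet large enough that the exceptional set $\{\, j : \delta_j^4 > 5 \delta^4 \,\}$ has at most $n/5$ elements (by Markov); the factor $200$ in the definition of $R$ is tuned precisely so that both constraints are satisfied simultaneously with room to spare.
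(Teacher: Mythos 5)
Your proposal is correct and follows essentially the same strategy as the paper: reduce to Lemma~\ref{lem_prob} by choosing a deterministic index set $\cJ$ of size at least $4n/5$ on which $R/\bar{\gamma}_j^3 \geq 40/\sqrt{n}$, so that the set produced by the lemma is contained in $\cS$. The only cosmetic difference is that you define $\cJ$ by the condition $\delta_j^4 \leq 5\delta^4$ and invoke Markov, whereas the paper uses $\bar{\gamma}_j^3 \leq 5\delta^2$ with the bound $\bar{\gamma}_j^6 \leq \delta_j^4$; both choices satisfy the cardinality and threshold requirements with room to spare.
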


\begin{proof} Denote $\cJ = \{ 1 \leq j \leq n ;
\bar{\gamma}_j^3 \leq 5 \delta^{2} \}$. Then,
$$ \delta^4 = \frac{1}{n} \sum_{j=1}^n \delta_j^4 \geq
\frac{1}{n} \sum_{j=1}^n \bar{\gamma}_j^6 \geq \frac{1}{n} \sum_{j
\not \in \cJ} \bar{\gamma}_j^6 > \frac{n - \#(\cJ)}{n} (5
\delta^2)^2.
$$ Denoting $k =
\#(\cJ)$, we thus see that $k / n \geq 24/25 \geq 4/5$. For $j \in
\cJ$, we have $40 / \sqrt{n} \leq R / \bar{\gamma}_j^3$. In order
to prove the lemma, it therefore suffices to show that with
probability greater than $1 - C \exp(- cn)$,
$$ \sum_{j \in \cS}  \Theta_j^2 \geq 1/8 \ \ \ \ \ \ \ \ \text{where} \
\ \ \ \ \cS = \{ j \in \cJ;
  |\Theta_j| \leq  40 / \sqrt{n} \}. $$
This is precisely the content of Lemma \ref{lem_prob}.
\end{proof}

\medskip Our  goal is to bound the integral in
(\ref{eq_1617}). Lemma \ref{lem_dom1} and Lemma \ref{lem_dom2} (with
the help of Corollary \ref{cor_prob}) control the contribution of
the interval $[-c \sqrt{n} / \delta^2, c \sqrt{n} / \delta^2]$. Next
we aim at bounding the contributions of $\xi \in \RR$ with $ c
\sqrt{n} / \delta^2 \leq |\xi| \leq n / \delta^4$. Denote
$$ J_n(\xi) = \EE \exp(-i \xi \Theta_1) \ \ \ \ \ \ \ \ \ \ \ (\xi \in \RR). $$
The function $J_n$ is even and real-valued, and is related to the
Bessel function of order $n/2-1 $.

\begin{lemma} We have
\begin{equation}
 J_n(\xi) \leq 1 - c \min \left \{ \xi^2  /n, 1 \right \} \ \ \ \ \ \ \ \ \ \text{for all} \ \xi \in \RR,
 \label{eq_1555}
 \end{equation}
where $c > 0$ is a universal constant. \label{lem_1647}
\end{lemma}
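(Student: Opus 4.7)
The plan is to split the argument according to whether $|\xi| \leq \sqrt{n}$ or $|\xi| \geq \sqrt{n}$; each regime is handled by a different tool.

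For $|\xi| \leq \sqrt{n}$ I would apply Taylor's theorem with remainder, which gives the pointwise inequality $\cos u \leq 1 - u^2/2 + u^4/24$ for every real $u$. Taking expectations and using the standard moment identities $\EE \Theta_1^2 = 1/n$ and $\EE \Theta_1^4 = 3/(n(n+2))$ for a coordinate of the uniform distribution on $S^{n-1}$ yields
\[ J_n(\xi) \leq 1 - \frac{\xi^2}{2n} + \frac{\xi^4}{8 n(n+2)}. \]
When $|\xi| \leq \sqrt{n}$ the quartic correction is bounded by a fixed fraction of the quadratic main term, so $J_n(\xi) \leq 1 - c\, \xi^2/n = 1 - c \min\{\xi^2/n, 1\}$ in this range.

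For $|\xi| \geq \sqrt{n}$ I need the uniform lower bound $1 - J_n(\xi) \geq c$. The starting point is the identity $1 - J_n(\xi) = 2\,\EE \sin^2(\xi \Theta_1/2)$. The coordinate $\Theta_1$ has explicit density $p_n(t) = c_n (1 - t^2)^{(n-3)/2}$ on $[-1, 1]$, and a Stirling estimate gives $c_n \asymp \sqrt{n}$, so $p_n(t) \geq c \sqrt{n}$ on a window $[0, a/\sqrt{n}]$ with $a$ a fixed constant (for $n$ beyond some absolute threshold). Restricting the expectation to this window and substituting $u = \xi t/2$ reduces the contribution to a constant multiple of $(\sqrt{n}/|\xi|) \int_0^{a|\xi|/(2\sqrt{n})} \sin^2 u\, du$. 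Taking $a$ large enough so that $a|\xi|/(2\sqrt{n}) \geq 1$ whenever $|\xi| \geq \sqrt{n}$, the elementary estimate $\int_0^X \sin^2 u\, du \geq X/2 - 1/4 \geq X/4$ for $X \geq 1$ makes the integral proportional to $|\xi|/\sqrt{n}$; the prefactor $\sqrt{n}/|\xi|$ then cancels that length and leaves an absolute lower bound.

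The main technical obstacle is the uniformity in $n$ of the density estimate $p_n(t) \geq c\sqrt{n}$ on $[0, a/\sqrt{n}]$, which uses $(1 - a^2/n)^{(n-3)/2} \geq e^{-a^2}$ and hence requires $n$ to exceed some absolute value. The finitely many excluded small values of $n$ can be handled separately by continuity: for each fixed $n \geq 2$ the random variable $\Theta_1$ is non-degenerate, so $J_n$ is strictly less than $1$ outside any neighbourhood of the origin, and a compactness argument promotes this to the required absolute constant.
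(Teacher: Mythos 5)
Your proof is correct and, in the second regime, takes a genuinely different route from the paper's. For $|\xi|\leq \sqrt n$ both proofs are Taylor expansions around the origin; the paper writes $J_n(\sqrt n\,\tau)=1-\tau^2/2+O(\tau^4)$ using $\EE(\sqrt n\,\Theta_1)^4\leq C$ and concludes for $|\xi|\leq c\sqrt n$ with an unspecified small $c$, whereas your pointwise inequality $\cos u\leq 1-u^2/2+u^4/24$ combined with the exact moments $\EE\Theta_1^2=1/n$, $\EE\Theta_1^4=3/(n(n+2))$ gives the clean bound $J_n(\xi)\leq 1-\tfrac38\xi^2/n$ on the whole range $|\xi|\leq\sqrt n$ with an explicit constant. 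The more substantive divergence is in the regime $|\xi|\geq\sqrt n$: the paper goes through the $L^1$ convergence of the rescaled density $g_n$ of $\sqrt n\,\Theta_1$ to the standard Gaussian (citing Diaconis--Freedman), which yields $\sup_\xi|J_n(\sqrt n\,\xi)-e^{-\xi^2/2}|\to 0$ and hence the desired gap from $1$; you instead bound the density $p_n(t)=c_n(1-t^2)^{(n-3)/2}$ from below by $c\sqrt n$ on a window $[0,a/\sqrt n]$, write $1-J_n(\xi)=2\,\EE\sin^2(\xi\Theta_1/2)$, and estimate the resulting $\int\sin^2$ directly after the change of variables. Your version is more elementary and self-contained (no appeal to total-variation convergence of spherical marginals), at the cost of a few lines of explicit estimation; the paper's is shorter given that reference. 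Both arguments in the tail regime need $n$ beyond some absolute threshold and handle the finitely many small $n$ by the same continuity/compactness remark — you state this explicitly, the paper leaves it implicit — and, for what it is worth, both implicitly assume $n\geq 2$ (for $n=1$ one has $J_1(\xi)=\cos\xi$, which equals $1$ at $\xi=2\pi$, so the lemma as stated fails; this has no effect on the paper's applications).
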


\begin{proof} Since $\EE (\sqrt{n} \Theta_1)^2 = 1$ and $\EE
(\sqrt{n} \Theta_1)^4 \leq C$, then Taylor's theorem yields
 $$ J_n(\sqrt{n} \tau) = 1 - \frac{\tau^2}{2} + O(\tau^4) $$
 for $|\tau| \leq 1$, as the odd moments vanish. This implies (\ref{eq_1555}) for $|\xi| \leq c \sqrt{n}$. The
density $f_n$ of the random variable $\Theta_1$ vanishes outside
$[-1,1]$, and is proportional to $t \mapsto (1 - t^2)^{(n-3)/2}$ on
$[-1,1]$. Denote $g_n(t) = n^{-1/2} f_n(n^{-1/2} t)$. Then
$$ \int_{-\infty}^{\infty} \left| g_n(t) - \frac{1}{\sqrt{2 \pi}} \exp \left(-t^2 / 2 \right)
\right| dt \stackrel{n \rightarrow \infty}{\longrightarrow} 0, $$ as
be may verified routinely (see, e.g., Diaconis and Freedman
\cite{DF2} for quantitative bounds).
 Therefore the Fourier transform satisfies
$$ \sup_{\xi \in \RR} \left| J_n(\sqrt{n} \xi) - \exp(-\xi^2/2) \right| \stackrel{n \rightarrow \infty}{\longrightarrow} 0
$$
 which implies (\ref{eq_1555}) in the range $|\xi| \geq c \sqrt{n}$.
\end{proof}

\begin{lemma}
Let $j=1,\ldots,n$. Then, for any $\tau \in \RR$,
$$ \EE |\vphi_j(\tau \Theta_j)|^2 \leq 1 - c \min \left \{ \tau^2 / n, \delta_j^{-4}  \right \}, $$
where $c > 0$ is a universal constant.
\label{lem_1627}
\end{lemma}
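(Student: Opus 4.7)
The plan is to reduce the claim to the pointwise bound on $J_n$ from Lemma \ref{lem_1647} by a symmetrization trick, then averaging over the sphere, and finally a Paley--Zygmund-style lower bound.

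First I would introduce an independent copy $X_j'$ of $X_j$, chosen independent of $\Theta$, and set $Y_j = X_j - X_j'$. This symmetrized variable is symmetric, with $\EE Y_j^2 = 2$ and $\EE Y_j^4 \leq C \delta_j^4$ (a routine expansion using $\EE X_j = 0$ and $\delta_j \geq 1$ from (\ref{eq_1633})). Crucially,
$$ |\vphi_j(s)|^2 = \vphi_j(s) \overline{\vphi_j(s)} = \EE \cos(s Y_j) \qquad (s \in \RR), $$
which turns $|\vphi_j|^2$ into the (real-valued) characteristic function of the symmetric variable $Y_j$.

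Next I would take expectation over $\Theta_j$. Since $\Theta_j$ is symmetric and independent of $Y_j$, and $J_n$ is by definition the characteristic function of $\Theta_1$, for any fixed $y \in \RR$ one has $\EE_{\Theta_j} \cos(\tau y \Theta_j) = J_n(\tau y)$. Fubini therefore gives
$$ \EE |\vphi_j(\tau \Theta_j)|^2 = \EE J_n(\tau Y_j), $$
and applying Lemma \ref{lem_1647} pointwise with $\xi = \tau Y_j$,
$$ \EE |\vphi_j(\tau \Theta_j)|^2 \leq 1 - c \, \EE \min \{ \tau^2 Y_j^2 / n, \, 1 \}. $$

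The remaining task is to show
$$ \EE \min \{ \tau^2 Y_j^2 / n, \, 1 \} \geq c' \min \{ \tau^2 / n, \, \delta_j^{-4} \}, $$
which I expect to be the main technical step, though not a deep obstacle: it is a Paley--Zygmund-type lower bound furnished by Lemma \ref{lem_859}(ii) applied to $Y_j^2/2$ (whose mean is $1$ and second moment is $O(\delta_j^4)$). Writing $a = \tau^2/n$ and choosing $a_0 = c_0 \delta_j^{-4}$ for a sufficiently small universal constant $c_0$, Lemma \ref{lem_859}(ii) gives $\EE \min\{Y_j^2, 1/a\} \geq c''$ for all $a \leq a_0$, hence $\EE \min\{a Y_j^2, 1\} \geq c'' a$. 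Monotonicity in $a$ then yields $\EE \min\{a Y_j^2, 1\} \geq c'' a_0$ for $a > a_0$. In both ranges the lower bound is comparable to $\min\{a, \delta_j^{-4}\}$, completing the proof.
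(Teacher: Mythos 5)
Your proposal is correct and follows essentially the same route as the paper: symmetrize via $Y_j = X_j - X_j'$, recognize $|\vphi_j|^2$ as the characteristic function of $Y_j$, swap the order of integration to get $\EE J_n(\tau Y_j)$, invoke Lemma~\ref{lem_1647}, and finish with the Paley--Zygmund-style truncation from Lemma~\ref{lem_859}(ii) applied to $Y_j^2/2$. The only cosmetic difference is that the paper phrases the interchange of integrals as an application of the Plancherel theorem, whereas you correctly note it is just Fubini together with the identity $\EE_{\Theta_j}\cos(\tau y \Theta_j) = J_n(\tau y)$.
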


\begin{proof} As before, denote by $f_n$ the density of the
random variable $\Theta_1$. Then,
$$
\EE |\vphi_j(\tau \Theta_j)|^2 = \int_{-\infty}^{\infty} |\vphi_j(\tau \xi)|^2 f_n(\xi) d \xi. $$
Let $\tilde{X_j}$ be an independent copy of $X_j$.
Define $Y = X_j - \tilde{X_j}$, a symmetric random variable. Then
the Fourier transform of $Y$ is
$$ \EE \exp(-i \xi Y) = \EE \exp(-i \xi X_j) \overline{\EE \exp(-i \xi \tilde{X_j})} = \vphi_j(\xi)
\overline{\vphi_j(\xi)} = |\vphi_j(\xi)|^2. $$ Hence the function $
|\vphi_j(\tau \xi)|^2$ is the Fourier transform of the random
variable $\tau Y$. Recall that $J_n(\xi)$ is the Fourier transform
of the density $f_n$. The central observation is that according to
the Plancherel theorem,
 $$ \EE |\vphi_j(\tau \Theta_j)|^2 = \int_{-\infty}^{\infty} |\vphi_j(\tau \xi)|^2 f_n(\xi) d \xi
 = \EE J_n(\tau Y) \leq 1 -
 c \EE \min \{ \tau^2 Y^2 / n, 1 \}, $$
 where the last inequality is the content of Lemma \ref{lem_1647}.
Denote $r = \tau^2 / n$ and $Z = Y^2 / 2$. In order to complete
 the proof of the lemma, it suffices to show that for any $r \geq 0$,
\begin{equation}
\EE \min \{ r Z, 1 \} \geq c  \min \left \{ r, \delta_j^{-4} \right \}.
\label{eq_1717}
\end{equation}
The left-hand side of (\ref{eq_1717}) is non-decreasing in $r$,
hence it is enough to prove (\ref{eq_1717}) when $r \leq
\delta_j^{-4} / 10$. Since $Y = X_j - \tilde{X_j}$ and $Z = Y^2/2$,
then $ \EE Z = 1$ and $\EE Z^2 = (3 + \delta_j^4) / 2 \leq 2 \delta_j^4$.
According to Lemma \ref{lem_859}(ii), $\EE 1_{ \{ Z \leq 10
\delta_j^4 \}} Z \geq 4/5$.  Therefore, for $0 \leq r \leq
\delta_j^{-4} / 10$,
$$ \EE \min \{ r Z, 1 \} \geq \EE 1_{ \{ Z \leq 10 \delta_j^4 \}} \min \{ r Z, 1 \}
= r \EE 1_{ \{ Z \leq 10 \delta_j^4 \}} Z \geq r / 2, $$ and
(\ref{eq_1717}) follows.
 The lemma is thus proven. \end{proof}

\medskip When the dimension $n$ is large,
the random variables $\Theta_1,\ldots,\Theta_n$ are ``approximately
independent''. One would thus expect that usually, for
functions $f_1,\ldots,f_n : \RR \rightarrow \CC$,
\begin{equation}
 \EE \prod_{j=1}^n f_j (\Theta_j) \approx  \prod_{j=1}^n \EE f_j
 (\Theta_j).
 \label{eq_1151}
 \end{equation}
The most straightforward way to obtain estimates in the spirit of
(\ref{eq_1151})
 is to compare the distribution
of $\Theta$ with that of a gaussian random vector of the same
expectation and covariance as in the proof of Lemma \ref{lem_prob}
above. Even though this approach works well in our present context,
we prefer to invoke below a recent inequality due to  Carlen, Lieb
and Loss \cite{CLL}.
 This inequality provides a particularly elegant way to exploit the ``approximate
independence'' of $\Theta_1,\ldots,\Theta_n$. It states
that for any non-negative, measurable functions $f_1,\ldots,f_n: [-1,1]
\rightarrow \RR$,
\begin{equation}
 \EE \prod_{j=1}^n f_j(\Theta_j)
\leq \prod_{j=1}^n \left( \EE f_j(\Theta_j)^2 \right)^{1/2}.
\label{eq_1401}
\end{equation}
See Barthe, Cordero-Erausquin, Ledoux and Maurey \cite{BCLM} for ramifications of the
Brascamp-Lieb type inequality (\ref{eq_1401}). Recall that $\vphi_1,\ldots,\vphi_n$
are the Fourier transforms of the independent random variables $X_1,\ldots,X_n$.

\begin{lemma} Let $\alpha > 0$ and assume that $\alpha \sqrt{n} / \delta^2 \leq n / \delta^4$.
Then, with probability greater than $1 - C(\alpha)
\exp(-c(\alpha) n / \delta^4)$ of selecting
 $(\Theta_1,\ldots,\Theta_n) \in S^{n-1}$,
$$ \int_{\alpha \sqrt{n} / \delta^2}^{n / \delta^4} \left| \prod_{j=1}^n \vphi_j( \Theta_j \xi) -
e^{-\xi^2/2} \right| \frac{d \xi}{|\xi|} \leq C(\alpha)
\exp(-c(\alpha) n / \delta^4) \leq \frac{\bar{C}(\alpha)
\delta^4}{n}.
$$ The right-hand side is also an upper bound for the integral from $-n/\delta^4$ to $-\alpha \sqrt{n} / \delta^2$.
Here $C(\alpha), \bar{C}(\alpha), c(\alpha)
> 0$ are constants depending solely on $\alpha$. \label{lem_dom3}
\end{lemma}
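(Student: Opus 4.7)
The Gaussian piece is essentially free: a standard estimate gives
$\int_{\alpha\sqrt{n}/\delta^2}^{n/\delta^4} e^{-\xi^2/2}\,d\xi/\xi \leq (\delta^4/(\alpha^2 n))\,e^{-\alpha^2 n/(2\delta^4)}$, which is already smaller than the desired bound. So the real task is to show that $\int |\varphi_\theta(\xi)|\,d\xi/|\xi|$ over the stated range is $\leq C(\alpha) e^{-c(\alpha) n/\delta^4}$ with the stated high probability when $\theta$ is replaced by the random $\Theta$.

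The plan is to estimate the expectation of this integral by Fubini, bounding the integrand pointwise via Carlen--Lieb--Loss and Lemma~\ref{lem_1627}, and then to conclude by Markov's inequality. First, I restrict to the set $\cJ = \{1 \leq j \leq n : \delta_j^4 \leq 5\delta^4\}$; by Markov's inequality applied to $\delta_1^4,\ldots,\delta_n^4$, one has $\#\cJ \geq 4n/5$. For $\xi$ in the range $[\alpha\sqrt{n}/\delta^2,\,n/\delta^4]$ and $j \in \cJ$, the quantity controlled by Lemma~\ref{lem_1627} satisfies
\[
\min\!\bigl\{\xi^2/n,\,\delta_j^{-4}\bigr\} \;\geq\; \min\!\bigl\{\alpha^2/\delta^4,\,1/(5\delta^4)\bigr\} \;=\; c_\alpha/\delta^4,
\]
so $\EE|\varphi_j(\Theta_j\xi)|^2 \leq 1 - c_\alpha'/\delta^4$ for every $j \in \cJ$ and every $\xi$ in our interval.

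Applying the Carlen--Lieb--Loss inequality (\ref{eq_1401}) to the nonnegative functions $\xi\mapsto |\varphi_j(\Theta_j\xi)|$ (viewed as functions of $\Theta_j$), bounding the factors with $j\notin\cJ$ trivially by $1$, and using the previous step gives
\[
\EE\prod_{j=1}^n|\varphi_j(\Theta_j\xi)| \;\leq\; \prod_{j\in\cJ}\bigl(1 - c_\alpha'/\delta^4\bigr)^{1/2} \;\leq\; \exp\!\bigl(-c_\alpha'' n/\delta^4\bigr)
\]
uniformly in $\xi$ in our range. Multiplying by $d\xi/|\xi|$ and integrating, the length of the logarithmic range contributes only a factor $\log(\sqrt{n}/(\alpha\delta^2))$, which is easily absorbed into the exponential, yielding $\EE\int \leq C(\alpha) e^{-c(\alpha) n/\delta^4}$. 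Together with the Gaussian estimate, Markov's inequality applied at the threshold equal to the square root of this expectation produces both the exceptional probability and the deterministic value of order $e^{-(c(\alpha)/2) n/\delta^4}$; the final form $\bar C(\alpha)\delta^4/n$ follows from the elementary inequality $e^{-cx} \leq 1/(ecx)$.

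The only mildly delicate point is the selection of $\cJ$: because $\delta$ is an \emph{averaged} fourth moment, one cannot hope to bound $\delta_j^{-4}$ from below for all $j$, and restricting to a subset of density close to one is what makes the pointwise exponential decay work. Everything else is routine once Lemma~\ref{lem_1627} and the Brascamp--Lieb-type bound (\ref{eq_1401}) are in hand.
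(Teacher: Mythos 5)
Your proof is correct and follows essentially the same route as the paper: restrict to a large subset $\cJ$ of indices with controlled fourth moment, combine Lemma~\ref{lem_1627} with the Carlen--Lieb--Loss inequality~(\ref{eq_1401}) to get a pointwise exponential bound on $\EE\prod_j|\vphi_j(\Theta_j\xi)|$, integrate, and finish by Markov applied at the square-root threshold. The only differences from the paper are cosmetic (your threshold $\delta_j^4\leq 5\delta^4$ versus the paper's $\delta_j\leq 2\delta$, and keeping a uniform bound in $\xi$ rather than the paper's $\min\{\xi^2,n/\delta^4\}$ before integrating).
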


\begin{proof} Lemma \ref{lem_1627} and (\ref{eq_1401}) imply that
for any $\xi \in \RR$,
$$
 \EE \left| \prod_{j=1}^n \vphi_j(\Theta_j \xi) \right| \leq
\prod_{j=1}^n \sqrt{ \EE |\vphi_j(\Theta_j \xi)|^2} \leq
\prod_{j=1}^n \left( 1 - c \min \left \{ \xi^2 / n, \delta_j^{-4}
\right \} \right). $$ Denote $\cJ = \{ 1 \leq j \leq n ; \delta_j
\leq 2 \delta \}$. Repeating a simple argument, we have
$$ \delta^4 = \frac{1}{n} \sum_{j=1}^n \delta_j^4 \geq
\frac{1}{n} \sum_{j \not \in \cJ} \delta_j^4 \geq \frac{n -
\#(\cJ)}{n} 16 \delta^4, $$ hence $\#(\cJ) \geq n / 2$. For any $\xi
\in \RR$,
\begin{eqnarray*}
\lefteqn{ \EE \left| \prod_{j=1}^n \vphi_j(\Theta_j \xi) \right|
\leq  \prod_{j=1}^n \left( 1 - \tilde{c} \min \left \{ \xi^2 / n,
\delta_j^{-4} \right \} \right) \leq \prod_{j \in \cJ} \left( 1 -
\tilde{c} \min \left \{ \xi^2 / n, \delta_j^{-4} \right \} \right) }
\\ & \leq & \left( 1 - c \min \left \{ \xi^2 / n, \delta^{-4} \right \} \right)^{n/2} \leq
 \exp( -c^{\prime} \min \left \{ \xi^2, n  / \delta^4 \right \} ).
 \phantom{aaaaaaaaaaaaaaaaaaaa}
\end{eqnarray*}
Therefore,
\begin{eqnarray*}
\lefteqn{ \EE \int_{\alpha \sqrt{n} / \delta^2 }^{ n / \delta^4} \left| \prod_{j=1}^n \vphi_j(\Theta_j \xi) -
e^{-\xi^2/2} \right| \frac{d \xi}{|\xi|} } \\ & \leq & \frac{
\delta^2}{\alpha \sqrt{n}}
 \int_{ \alpha \sqrt{n} / \delta^2}^{n / \delta^4} e^{ -\tilde{c} \min \left \{ \xi^2, n / \delta^4 \right \}  } + e^{-\xi^2/2}
 d \xi  \leq
C(\alpha) e^{-c(\alpha) n / \delta^4}.
 \end{eqnarray*}
From the Chebyshev inequality,
$$ \PP \left(  \int_{\alpha \sqrt{n} / \delta^2 }^{ n / \delta^4}  \left| \prod_{j=1}^n \vphi_j(\Theta_j \xi) -
e^{-\xi^2/2} \right| \frac{d \xi}{|\xi|} \geq \sqrt{ C(\alpha)
e^{-c(\alpha) n / \delta^4}} \right) \leq \sqrt{
C(\alpha) e^{-c(\alpha) n / \delta^4}}.
$$
\end{proof}

\medskip The results obtained so far may be summarized as
follows:

\begin{lemma} There exists a subset $\cF \subseteq S^{n-1}$
with $\sigma_{n-1}(\cF) \geq 1 - C \exp(-c n / \delta^4)$ such that
for any $\theta = (\theta_1,\ldots,\theta_n) \in \cF$ with $\sum_{j=1}^n \theta_j^4 \delta_j^4 \leq 1$,
\begin{equation}
 \int_{-n / \delta^4}^{n / \delta^4} \left| \prod_{j=1}^n \vphi_j(
\theta_j \xi) - e^{-\xi^2/2} \right| \frac{d \xi}{|\xi|} \leq C
\left( \sum_{j=1}^n \theta_j^4 \delta_j^4 + \left| \sum_{j=1}^n
\gamma_j^3 \theta_j^3 \right| + \frac{\delta^4}{n}  \right),
\label{eq_1023}
\end{equation}
 where
$C > 0$ is a universal constant. \label{lem_1028}
\end{lemma}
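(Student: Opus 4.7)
The plan is to assemble Lemma~\ref{lem_1028} by splitting the integration domain $[-n/\delta^4,\, n/\delta^4]$ into three pieces and applying Lemma~\ref{lem_dom1}, Lemma~\ref{lem_dom2} and Lemma~\ref{lem_dom3} respectively. The key is to choose the parameters so the three pieces fit together seamlessly. Set $R_2 = 200 \delta^2 / \sqrt{n}$, as in Corollary~\ref{cor_prob}, and let $c$ be the universal constant from Lemma~\ref{lem_dom2}; then $c R_2^{-1}$ is a universal constant multiple of $\sqrt{n}/\delta^2$. Choose $\alpha > 0$ (depending only on the constants above) so that $\alpha \sqrt{n}/\delta^2 = c R_2^{-1}$. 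This way, Lemma~\ref{lem_dom2} will cover the interval $[\eps^{-2/3},\, \alpha \sqrt{n}/\delta^2]$ and Lemma~\ref{lem_dom3} will pick up precisely where Lemma~\ref{lem_dom2} leaves off.

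First I would define $\cF$ as the intersection of two good sets: the set where Corollary~\ref{cor_prob} applies (providing condition~(\ref{eq_1152}) for $R_2 = 200\delta^2/\sqrt{n}$), and the set where the integral estimate of Lemma~\ref{lem_dom3} holds for our chosen $\alpha$. The former has measure at least $1 - C e^{-cn}$ and the latter at least $1 - C(\alpha) e^{-c(\alpha) n/\delta^4}$. Since $\delta \geq 1$ by (\ref{eq_1633}), we have $n \geq n/\delta^4$, so the union bound yields $\sigma_{n-1}(\cF) \geq 1 - C e^{-cn/\delta^4}$ after absorbing $\alpha$ into the universal constants.

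Fix $\theta \in \cF$ satisfying the hypothesis $\eps^4 = \sum_j \theta_j^4 \delta_j^4 \leq 1$. In the generic case $\eps^{-2/3} \leq \alpha \sqrt{n}/\delta^2$, Lemma~\ref{lem_dom1} controls the contribution from $|\xi| \leq \eps^{-2/3}$ by $C(R_1 + \eps^4)$ where $R_1 = |\sum_j \gamma_j^3 \theta_j^3|$; Lemma~\ref{lem_dom2}, which is applicable because the condition (\ref{eq_1152}) is guaranteed by membership in $\cF$, bounds the contribution from $\eps^{-2/3} \leq |\xi| \leq \alpha\sqrt{n}/\delta^2$ by $C\eps^4$; and Lemma~\ref{lem_dom3} bounds the contribution from $\alpha\sqrt{n}/\delta^2 \leq |\xi| \leq n/\delta^4$ by $C \delta^4/n$. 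Summing the three contributions gives the right-hand side of~(\ref{eq_1023}).

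The remaining details to check are the degenerate regimes. If $\eps^{-2/3} > \alpha \sqrt{n}/\delta^2$, then the middle region is empty and Lemma~\ref{lem_dom1} already covers $[-\alpha\sqrt{n}/\delta^2,\, \alpha\sqrt{n}/\delta^2]$, while Lemma~\ref{lem_dom3} covers the tails; no separate argument is needed. If $\alpha\sqrt{n}/\delta^2 > n/\delta^4$, equivalently $\delta^4 > n/\alpha^2$, then $\delta^4/n$ exceeds a universal constant and~(\ref{eq_1023}) is trivial (the integrand is at most $2/|\xi|$ and the integration range is short). The one point that requires genuine care, and which I expect to be the main obstacle, is the bookkeeping of the probabilistic failure event: the bound in Lemma~\ref{lem_dom3} degrades as $\delta$ grows, and we need to confirm that $e^{-cn/\delta^4}$ rather than $e^{-cn}$ is the correct rate throughout, which is why we used $\delta \geq 1$ to absorb the Corollary~\ref{cor_prob} estimate.
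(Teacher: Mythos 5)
Your proof follows the paper's exactly: decompose $[-n/\delta^4, n/\delta^4]$ at $\eps^{-2/3}$ and at $\alpha\sqrt{n}/\delta^2 = cR_2^{-1}$, apply Lemma~\ref{lem_dom1}, Lemma~\ref{lem_dom2} (available via Corollary~\ref{cor_prob} with $R_2 = 200\delta^2/\sqrt{n}$), and Lemma~\ref{lem_dom3} on the three pieces, and take $\cF$ to be the intersection of the two good sets, using $\delta \geq 1$ to absorb $e^{-cn}$ into $e^{-cn/\delta^4}$. The one small slip is in the degenerate case $\alpha\sqrt{n}/\delta^2 > n/\delta^4$: the parenthetical ``the integrand is at most $2/|\xi|$ and the integration range is short'' does not yield a finite integral near $\xi = 0$, where $\int 2\,d\xi/|\xi|$ diverges; the correct observation is that then $n/\delta^4 < \alpha^2 \leq 1 \leq \eps^{-2/3}$ (using $\eps \leq 1$), so Lemma~\ref{lem_dom1} alone already covers the entire integration range.
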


\begin{proof} Let $\cF_1 \subseteq S^{n-1}$ be the set of directions
with $\sigma_{n-1}(\cF_1) \geq 1 - C \exp(-c n)$ whose existence is
guaranteed by Corollary \ref{cor_prob}. Assume that $\theta \in
\cF_1$ is such that $\sum_j \theta_j^4 \delta_j^4 \leq 1$. The bound
(\ref{eq_1023}) is the culmination of three arguments: Lemma
\ref{lem_dom1} controls the contribution of $|\xi| \leq
\eps^{-2/3}$, for $\eps = \left( \sum_{j=1}^n \theta_j^4 \delta_j^4
\right)^{1/4} \leq 1$. Thanks to the definition of $\cF_1$, Lemma
\ref{lem_dom2} with $R_2 = 200 \delta^2 / \sqrt{n}$ provides an
upper bound for the contribution up to $|\xi| \leq c \sqrt{n} /
\delta^2$. We conclude with an application of Lemma \ref{lem_dom3},
with $\alpha$ being a universal constant. Denote by $\cF_2 \subseteq
S^{n-1}$ the set with $\sigma_{n-1}(\cF_2) \geq 1 - C \exp(-c n /
\delta^4)$ whose existence is guaranteed by Lemma \ref{lem_dom3}.
Setting $\cF = \cF_1 \cap \cF_2$, we see that (\ref{eq_1023}) holds
for any $\theta \in \cF$ with $\sum_j \theta_j^4 \delta_j^4 \leq 1$.
\end{proof}

\begin{corollary}
There exists a subset
$\cF_1 \subseteq S^{n-1}$ with $\sigma_{n-1}(\cF_1) \geq 1 - C
\exp(-c n / \delta^4)$ with the following property: For any $\theta
= (\theta_1,\ldots,\theta_n) \in \cF_1$ and $t \in \RR$,
\begin{equation}
  \left| \PP \left( \sum_{j=1}^n \theta_j X_j \leq
t \right) \, - \, \Phi(t) \right| \leq C \left[ \sum_{j=1}^n
\delta_j^4 \theta_j^4  + \left| \sum_{j=1}^n \gamma_j^3 \theta_j^3
\right| + \frac{\delta^4}{n}  \right]. \label{eq_2133}
\end{equation}
Here $\Phi(t) = (2 \pi)^{-1/2} \int_{-\infty}^{t} e^{-t^2/2}dt$ and
$C, c
> 0$ are universal constants. \label{cor_1123}
\end{corollary}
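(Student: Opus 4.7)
The plan is to combine Lemma~\ref{lem_1028} with the Feller smoothing inequality (\ref{eq_958}) in a direct way; the set $\cF_1$ in the corollary will be taken to be exactly the set $\cF$ produced by Lemma~\ref{lem_1028}, which already has measure at least $1 - C\exp(-cn/\delta^4)$.

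First I would apply the smoothing inequality (\ref{eq_958}) with the specific choice $T = n/\delta^4$. For any $\theta \in S^{n-1}$ this gives
\[
\sup_{t \in \RR} \left| F_{\theta}(t) - \Phi(t) \right|
\leq C \int_{-n/\delta^4}^{n/\delta^4} \frac{|\vphi_{\theta}(\xi) - e^{-\xi^2/2}|}{|\xi|}\, d\xi + \frac{C \delta^4}{n},
\]
so the term $C/T$ already contributes $C\delta^4/n$, which is absorbed into the right-hand side of (\ref{eq_2133}).

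Next I would split into two cases. If $\sum_{j=1}^n \theta_j^4 \delta_j^4 \leq 1$ and $\theta \in \cF_1 = \cF$, then Lemma~\ref{lem_1028} bounds the Fourier integral by
\[
C \left( \sum_{j=1}^n \theta_j^4 \delta_j^4 + \Bigl| \sum_{j=1}^n \gamma_j^3 \theta_j^3 \Bigr| + \frac{\delta^4}{n} \right),
\]
which is exactly the desired bound (\ref{eq_2133}). If instead $\sum_{j=1}^n \theta_j^4 \delta_j^4 > 1$, then the right-hand side of (\ref{eq_2133}) already exceeds an absolute constant, while the left-hand side $|F_\theta(t) - \Phi(t)|$ is trivially at most $1$; by choosing $C$ large enough the inequality holds for every $\theta \in S^{n-1}$ in this regime.

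There is no real obstacle here: the corollary is essentially the packaging of Lemma~\ref{lem_1028} through the Fourier inversion lemma, and the only subtlety is the case split on the size of $\sum \theta_j^4 \delta_j^4$, which is needed because Lemma~\ref{lem_1028} was stated under that boundedness hypothesis. The measure estimate on $\cF_1$ is inherited verbatim from that of $\cF$ in Lemma~\ref{lem_1028}.
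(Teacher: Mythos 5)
Your proposal is correct and takes essentially the same approach as the paper: take $\cF_1$ to be the set $\cF$ from Lemma~\ref{lem_1028}, feed that lemma's bound into the smoothing inequality (\ref{eq_958}), and dispose of the regime $\sum_j \delta_j^4 \theta_j^4 > 1$ by noting that (\ref{eq_2133}) is then trivial since the left-hand side is at most $1$.
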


\begin{proof} It is enough to consider $\theta$ for which $\sum_j
\delta_j^4 \theta_j^4 \leq 1$, as otherwise (\ref{eq_2133}) holds
trivially. The bound (\ref{eq_2133}) is thus an immediate
consequence of the smoothing inequality (\ref{eq_958}) and Lemma
\ref{lem_1028}. \end{proof}

\section{Deviation inequalities}
\label{sec4}

It remains to deduce Theorem \ref{thm_1517} from Corollary
\ref{cor_1123}. To that end, we need to analyze the terms $
\sum_{j=1}^n \gamma_j^3 \theta_j^3$ and $\sum_{j=1}^n \delta_j^4
\theta_j^4 $ appearing in Corollary \ref{cor_1123}. We would like to
get a bound in (\ref{eq_1412}) of the form $C(\rho) \delta^4 / n$,
where $C(\rho)$ depends on $\rho$ solely. This is the reason we use
the following crude lemma.

\begin{lemma} Suppose that $(\Theta_1,\ldots,\Theta_n) \in S^{n-1}$ is a random vector,
distributed uniformly on $S^{n-1}$. Then, for any $t \geq 0$,
\begin{equation}
\PP \left( \left| \sum_{j=1}^n \gamma_j^3 \Theta_j^3 \right| \geq t
\frac{\delta^4}{n} \right) \leq C \exp \left( -c t^{2/3} \right),  \label{eq_1719}
 \end{equation}
and additionally,
\begin{equation}
 \PP \left( \sum_{j=1}^n \delta_j^4  \Theta_j^4 \geq t \frac{\delta^4}{n}
  \right) \leq C \exp \left( -c \sqrt{t}  \right).
 \label{eq_1341}
 \end{equation}
Here $C, c > 0$ are universal constants. \label{lem_927}
\end{lemma}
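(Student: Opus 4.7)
The plan is to use the Gaussian model of the sphere: write $\Theta = G/\|G\|$ with $G = (g_1,\ldots,g_n)$ a standard Gaussian vector in $\RR^n$. Then
\[
\sum_{j=1}^n \gamma_j^3 \Theta_j^3 = \|G\|^{-3}\sum_{j=1}^n \gamma_j^3 g_j^3, \qquad
\sum_{j=1}^n \delta_j^4 \Theta_j^4 = \|G\|^{-4}\sum_{j=1}^n \delta_j^4 g_j^4.
\]
Bernstein's inequality applied to the $\chi^2_n$-distributed $\|G\|^2$ (exactly as in the proof of Lemma \ref{lem_prob}) gives $\PP(\|G\|^2 \leq n/2) \leq Ce^{-cn}$, which is easily absorbed into the right-hand sides of (\ref{eq_1719}) and (\ref{eq_1341}). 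On the complementary event one has $\|G\|^{-3} \leq Cn^{-3/2}$ and $\|G\|^{-4} \leq Cn^{-2}$, so (\ref{eq_1719}) reduces to a tail bound for $|\sum_j \gamma_j^3 g_j^3|$ at level $\sim t\delta^4\sqrt{n}$, and (\ref{eq_1341}) to one for $\sum_j \delta_j^4 g_j^4$ at level $\sim tn\delta^4$.

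For (\ref{eq_1719}), the summands $Y_j = \gamma_j^3 g_j^3$ are independent, centered, symmetric, and sub-Weibull of parameter $2/3$, with $\|Y_j\|_{\psi_{2/3}} \leq C|\gamma_j|^3 \leq C\delta_j^2$ by (\ref{eq_1633}). A Bernstein-type inequality for sums of independent sub-Weibull random variables in the absence of exponential moments --- the ``Mendelson'' ingredient acknowledged above --- gives
\[
\PP\left(\left|\sum_{j=1}^n Y_j\right| \geq s\right) \leq C\exp\left(-c\min\left( \frac{s^2}{\sum_j \delta_j^4},\; \left(\frac{s}{\max_j \delta_j^2}\right)^{2/3}\right)\right).
\]
Since $\sum_j \delta_j^4 = n\delta^4$ and $\max_j \delta_j^4 \leq n\delta^4$, so $\max_j \delta_j^2 \leq \sqrt{n}\,\delta^2$, substituting $s = c_0 t\delta^4 \sqrt{n}$ makes both competing exponents at least $c\,t^{2/3}\delta^{4/3}$; since $\delta \geq 1$ (from $\EE X_j^4 \geq (\EE X_j^2)^2$) this is at least $ct^{2/3}$ for $t \geq 1$, and for bounded $t$ the claim is trivial by adjusting the constant $C$.

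Inequality (\ref{eq_1341}) is handled identically after recentering. One has $\EE \sum_j \delta_j^4 g_j^4 = 3n\delta^4$, so for $t$ larger than an absolute constant the event $\{\sum_j \delta_j^4 g_j^4 \geq c t n\delta^4\}$ implies $\{\sum_j \delta_j^4(g_j^4-3) \geq c' t n\delta^4\}$, while for smaller $t$ the bound (\ref{eq_1341}) is trivial. The centered summands are independent sub-Weibull of parameter $1/2$ with $\|\delta_j^4(g_j^4-3)\|_{\psi_{1/2}} \leq C\delta_j^4$; plugging the coefficient bounds $\sum_j \delta_j^8 \leq (\max_j \delta_j^4) \sum_j \delta_j^4 \leq n^2\delta^8$ and $\max_j \delta_j^4 \leq n\delta^4$ into the two-regime Bernstein bound with exponent $1/2$ produces $\min(ct^2, c\sqrt{t})$ in the exponent, which dominates $c\sqrt{t}$ for $t \geq 1$. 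The main technical point in both parts is the invocation of a Bernstein-type deviation inequality valid without exponential moments; its two-regime Gaussian/Weibull shape is then matched to our coefficient bounds via $\max_j \delta_j^4 \leq n\delta^4$ to yield a clean single-exponent Weibull tail in $t$.
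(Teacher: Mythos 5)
Your proof is correct and takes essentially the same approach as the paper's: both pass to independent Gaussians via $\Theta = G/\|G\|$, control $\|G\|^2$ by chi-square (Bernstein) concentration, and then apply a sub-Weibull deviation bound (with parameters $\psi_{2/3}$ and $\psi_{1/2}$) to $\sum_j \gamma_j^3 g_j^3$ and $\sum_j \delta_j^4(g_j^4-3)$. The only difference is cosmetic: the paper invokes the moment-inequality form from Adamczak--Litvak--Pajor--Tomczak-Jaegermann (via Hitczenko--Montgomery-Smith--Oleszkiewicz) and optimizes the exponent $p$ in Chebyshev's inequality, whereas you quote the equivalent two-regime tail bound directly and match its two branches against $\sum_j\delta_j^4=n\delta^4$ and $\max_j\delta_j^4\leq n\delta^4$.
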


\begin{proof} Introduce independent, standard gaussian random
variables $ \Gamma_1,\ldots,\Gamma_n$ that are independent of the
$\Theta_j$'s. Let $Z$ be a chi-square random variable with $n$
degrees of freedom, independent of the $\Gamma_j$'s and
$\Theta_j$'s. As in (\ref{eq_1128}), we know that $n/2 \leq Z \leq 2
n$ with probability greater than $1 - C \exp(-c n)$. Thus,
\begin{equation}
  \PP \left( \left| \sum_{j=1}^n \gamma_j^3 \Theta_j^3 \right| \geq t \right) =
 \PP \left( \left| \frac{\sum_{j=1}^n \gamma_j^3 \Gamma_j^3}{Z^{3/2}} \right| \geq t \right) \leq
\PP \left( \left| \sum_{j=1}^n \gamma_j^3 \Gamma_j^3 \right| \geq \frac{n^{3/2} t}{4} \right) + C e^{-cn}.
\label{eq_1116}
\end{equation}
The random variable $Y = \sum_{j=1}^n \gamma_j^3 \Gamma_j^3$ is the
sum of independent, mean zero random variables. We will apply a
moment inequality we learned from Adamczak, Litvak, Pajor and
Tomczak-Jaegermann \cite[Section 3]{AL}, which builds upon previous
work by Hitczenko, Montgomery-Smith, and Oleszkiewicz \cite{H}.
Recall that $\EE \exp(c \Gamma_j^2) \leq 2$ for a universal constant
$c > 0$. In the terminology of \cite{AL}, the random variables
$\Gamma_1^3,\ldots,\Gamma_n^3$ are random variables of class
$\psi_{2/3}$, hence for any $p \geq 2$,
$$ \left( \EE |Y|^p \right)^{1/p} \leq C \left( p^{1/2} \sqrt{\sum_{j=1}^n \gamma_j^6}
+ p^{3/2} \left( \sum_{j=1}^n |\gamma_j|^{{3p}} \right)^{1/p} \right)
\leq \tilde{C} p^{3/2} \sqrt{\sum_{j=1}^n \gamma_j^6} \leq \bar{C} p^{3/2} \sqrt{n}  \delta^2,
$$
as $\gamma_j^6 \leq \delta_j^4$ for all $j$. According to the
Chebyshev inequality, for any $t \geq C \delta^2 \sqrt{n}$,
$$
 \PP \left( \left| \sum_{j=1}^n \gamma_j^3 \Gamma_j^3 \right| \geq t \right) \leq \frac{\EE |Y|^p}{t^p}
\leq \frac{ \left( \bar{C} p^{3/2} \sqrt{n} \delta^2 \right)^p
}{t^p} \leq e^{-p} \leq  \exp \left(-\tilde{c}
\frac{t^{2/3}}{(\delta^4 n)^{1/3}} \right) $$ where $p = c t^{2/3} /
(\delta^4 n)^{1/3}$ for an appropriate small universal constant $c
> 0$. From (\ref{eq_1116}) and the last inequality,
\begin{equation}
 \PP \left( \left| \sum_{j=1}^n \gamma_j^3 \Theta_j^3 \right| \geq t \frac{\delta^2}{n} \right)
 \leq C \exp \left( -c t^{2/3}
\right) + C \exp(-cn) \ \ \ \ \ \ \ \text{for all} \ t > C. \label{eq_1127}
\end{equation}
According to the Cauchy-Schwartz inequality in (\ref{eq_2210}) we
always have $\left| \sum_{j=1}^n \gamma_j^3 \Theta_j^3 \right| \leq
\sqrt{ \sum_{j=1}^n \delta_j^4 \Theta_j^4} \leq \sqrt{n} \delta^2$
 and hence the probability on the left-hand side of (\ref{eq_1127})
vanishes for $t \geq n^{3/2}$. We may thus deduce (\ref{eq_1719}) from
(\ref{eq_1127}). Inequality (\ref{eq_1341}) is proven in a similar
vein: Denote $W = \sum_{j=1}^n \delta_j^4 \left[ \Gamma_j^4 -
3 \right]$. Then, $\EE W = 0$ and for any $t \geq 0$,
\begin{equation}
 \PP \left( \sum_{j=1}^n \delta_j^4 \Theta_j^4 \geq 12 \sum_{j=1}^n \frac{\delta_j^4}{n^2}
+ t \right) \leq \PP(W \geq n^2 t / 4) + C \exp(- cn).
\label{eq_1013}
\end{equation}
The random variables $\Gamma_1^4 - 3,\ldots,\Gamma_n^4 - 3$ are
independent random variables of class $\psi_{1/2}$. Again, using the
inequality from \cite[Section 3]{AL} we see that for $p \geq 2$,
$$ \left( \EE |W|^p \right)^{1/p} \leq C \left( p^{1/2} \sqrt{\sum_{j=1}^n \delta_j^8}
+ p^{2} \left( \sum_{j=1}^n \delta_j^{{4p}} \right)^{1/p} \right)
\leq \tilde{C} p^{2} \sum_{j=1}^n \delta_j^4 = \tilde{C} p^{2} n \delta^4.
$$
Using the Chebyshev inequality, as before, we deduce that for any $t
> C$,
\begin{equation}
 \PP(W \geq t n \delta^4) \leq C \exp \left(-c \sqrt{t} \right).
\label{eq_912}
\end{equation}
Inequalities (\ref{eq_1013}) and (\ref{eq_912}) lead to the bound
$$ \PP \left( \sum_{j=1}^n \delta_j^4 \Theta_j^4 \geq t \frac{\delta^4}{n}
\right) \leq C \exp \left(-c \sqrt{t} \right) + C \exp(- cn) \ \ \ \
\ \ \text{for all} \ t \geq 15. $$ Since  with probability one
$\sum_{j=1}^n \delta_j^4 \Theta_j^4 \leq n \delta^4$, the bound
(\ref{eq_1341}) follows. \end{proof}

\medskip \emph{Proof of Theorem \ref{thm_1517}.} We may assume that
$(\log 1/\rho)^2 \leq \tilde{c} n / \delta^4$, for a small universal
constant $\tilde{c} > 0$, since otherwise the conclusion
(\ref{eq_1412}) of the theorem is trivial, for an appropriate choice
of a universal constant $C$ in Theorem \ref{thm_1517}. Therefore,
$$ \rho \geq \exp \left(-\sqrt{\tilde{c} n / \delta^4} \right) \geq \exp
\left(-c n / \delta^4 \right) $$
where $c > 0$ is the constant from Corollary \ref{cor_1123}.
Let $\cF_1 \subseteq S^{n-1}$ be the subset of the sphere with $\sigma_{n-1}(\cF_1)
\geq 1 - C \exp(-c n / \delta^4) \geq 1 - C \rho$ whose existence
is guaranteed by Corollary \ref{cor_1123}. According to Lemma \ref{lem_927}, there exists
a subset $\cF_2 \subseteq S^{n-1}$ with $\sigma_{n-1}(\cF_2) \geq 1 - \rho$
such that for any $\theta = (\theta_1,\ldots,\theta_n) \in \cF_2$,
\begin{equation}
  \left| \sum_{j=1}^n \gamma_j^3 \theta_j^3 \right| + \sum_{j=1}^n \delta_j^4  \theta_j^4 \leq
\tilde{C} \left( \log \frac{1}{\rho} \right)^{3/2} \frac{\delta^4}{n} +
    \tilde{C} \left( \log \frac{1}{\rho} \right)^2 \frac{\delta^4}{n}
\leq \bar{C} \left( \log \frac{1}{\rho} \right)^2 \frac{\delta^4}{n}. \label{eq_934}
\end{equation}
Denote $\cF = \cF_1 \cap \cF_2$. Then $\sigma_{n-1}(\cF) \geq 1 -
C^{\prime} \rho$. Furthermore, according to Corollary \ref{cor_1123}
and to (\ref{eq_934}), the desired bound (\ref{eq_1412}) holds for
any $\theta \in \cF$, with $C(\rho) \leq \hat{C} (\log 1 / \rho)^2$.
\hfill $\square$

\medskip \emph{Remark.} There is some wiggle room in the bound for
$C(\rho)$ in Theorem \ref{thm_1517}. One easily notices  that the
bounds stated in Lemma \ref{lem_927} are, in many cases, quite weak:
When all the $\delta_j$ are comparable, a better analysis of the
moment inequality from \cite[Section 3]{AL} leads to a sub-gaussian
tail, at least in some range. If one is interested in a version of
Theorem \ref{thm_1517} where $\delta^4 = \sum_j \EE X_j^4 / n$ is
replaced by the larger quantity $\max_j \EE X_j^4$, finer analogs of
Lemma \ref{lem_927} may be employed. For such a version of Theorem
\ref{thm_1517}, the power of the logarithm in the bound for
$C(\rho)$ may essentially be improved, from $2$ to $1/2$, at least
for $\rho$ in some range.

\section{Explicit, universal coefficients}
\label{sec5}

This section is devoted to the proof of Theorem \ref{thm_1328} and
related statements. We assume that the independent random variables
$X_1,\ldots,X_n$ are identically distributed, and that they have the same
 distribution as a certain random variable $X$. This random variable $X$ has mean zero,
variance one, and we denote $\delta = (\EE X^4)^{1/4}$. Its Fourier
transform is
$$ \vphi(\xi) = \EE \exp(-i \xi X) \quad \quad \quad \quad (\xi \in \RR). $$
As before we fix $\theta \in S^{n-1}$ and set
\[ \vphi_\theta(\xi) = \prod_{j=1}^n \vphi(\theta_j \xi) \quad \quad \quad \quad (\xi \in \RR). \]
Let $\tilde{X}$ be an independent copy of $X$, and define $Y = X -
\widetilde{X}$. The next three lemmas bound an integral of
$|\vphi_\theta|$ in terms of a certain arithmetic property of
$\theta$. The property is quite similar to the one introduced by
Rudelson and Vershynin \cite{RV}; we closely follow their
presentation, taking into account several simplifications proposed
in \cite{FS}.

\begin{lemma} For any $\xi \in \RR$,
$$ |\vphi_\theta(\xi)|  \leq  \exp \left\{ -4 \EE d^2 \left( \frac{\xi Y}{2 \pi} \theta, \ZZ^n\right)
 \right\}. $$ \label{lem_1101}
\end{lemma}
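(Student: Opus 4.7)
\emph{Proof proposal.} The plan is to work with $|\vphi_\theta(\xi)|^2$, convert each factor into an expectation over the symmetrized variable $Y$, and then apply a termwise trigonometric bound involving distances to $\ZZ$.

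First, since $Y = X - \widetilde{X}$ is a symmetric random variable, the calculation already recorded in Lemma~\ref{lem_1627} gives $|\vphi(\eta)|^2 = \EE \exp(-i \eta Y) = \EE \cos(\eta Y)$ for every $\eta \in \RR$. Applying this to $\eta = \theta_j \xi$ and taking a product over $j$,
\[
 |\vphi_\theta(\xi)|^2 \;=\; \prod_{j=1}^n |\vphi(\theta_j \xi)|^2 \;=\; \prod_{j=1}^n \EE \cos(\theta_j \xi Y).
\]
Each factor is a number between $-1$ and $1$, and we will bound it from above by an exponential.

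The key termwise estimate I would use is the elementary inequality
\[
 1 - \cos(2 \pi x) \;=\; 2 \sin^2(\pi x) \;\geq\; 8 \, d^2(x, \ZZ) \qquad (x \in \RR),
\]
which follows from the concavity bound $|\sin(\pi x)| \geq 2 \, d(x, \ZZ)$ on a period of length one and $1$-periodicity of both sides. Substituting $x = \theta_j \xi Y / (2\pi)$ and taking the expectation over $Y$,
\[
 \EE \cos(\theta_j \xi Y) \;\leq\; 1 - 8 \, \EE \, d^2\!\left(\tfrac{\theta_j \xi Y}{2\pi},\, \ZZ\right).
\]

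Finally I would combine the terms using $1 - t \leq e^{-t}$ and the obvious identity
\[
 \sum_{j=1}^n d^2\!\left(\tfrac{\theta_j \xi y}{2\pi},\, \ZZ\right) \;=\; d^2\!\left(\tfrac{\xi y}{2\pi}\theta,\, \ZZ^n\right) \qquad (y \in \RR),
\]
together with linearity of expectation in the single random variable $Y$. This yields
\[
 |\vphi_\theta(\xi)|^2 \;\leq\; \prod_{j=1}^n \exp\!\left(-8 \, \EE \, d^2\!\left(\tfrac{\theta_j \xi Y}{2\pi},\, \ZZ\right)\right) \;=\; \exp\!\left(-8 \, \EE \, d^2\!\left(\tfrac{\xi Y}{2\pi}\theta,\, \ZZ^n\right)\right),
\]
and taking a square root gives the lemma. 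There is no real obstacle here; the only point that requires a moment's thought is the sharp trigonometric inequality $1 - \cos(2\pi x) \geq 8 \, d^2(x,\ZZ)$, which must be tight enough to produce the constant $4$ in the exponent (rather than something smaller).
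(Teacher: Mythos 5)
Your proof is correct and matches the paper's own argument essentially line for line: same use of the symmetrization $|\vphi(\eta)|^2 = \EE\cos(\eta Y)$, the same trigonometric bound $1 - \cos(2\pi x) \geq 8\,d^2(x,\ZZ)$ (the paper writes it as $\cos\theta \leq 1 - \tfrac{2}{\pi^2}d^2(\theta, 2\pi\ZZ)$, which is the same inequality), and the same combination via $1 - t \leq e^{-t}$ followed by the additivity of $d^2(\cdot,\ZZ^n)$ over coordinates. The only cosmetic difference is that the paper takes the square root factor by factor while you take it once at the end.
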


\begin{proof} It is easily verified that
\[ \cos \theta \leq 1 - \frac{2}{\pi^2} \, d^2(\theta, 2 \pi \ZZ) = 1 - 8 d^2 \left(\theta / (2 \pi), \ZZ \right)~ \quad \quad (\theta \in \RR).\]
As in the proof of Lemma \ref{lem_1627}, for any $\xi \in \RR$,
\begin{eqnarray*}
\lefteqn{|\vphi(\xi)|^2 = \EE \exp(- i \xi Y) = \EE \cos (\xi Y)  } \\ & \leq & 1 - 8 \EE d^2 \left(\xi Y / (2 \pi), \ZZ \right) \leq \exp \left \{ -8 \EE d^2 \left(\xi Y / (2 \pi), \ZZ \right) \right \}.
\end{eqnarray*}
Therefore,
$$ |\vphi_\theta(\xi)| = \prod_{j=1}^n |\vphi(\xi \theta_j)| \leq \exp \left \{ -4 \EE \sum_{j=1}^n
d^2 \left(\xi \theta_j Y / (2 \pi), \ZZ \right) \right \}. $$
\end{proof}

The following lemma summarizes a few properties of the even function
$$ S(\xi) = \sqrt{\EE d^2 \left( \frac{\xi Y}{2 \pi} \theta, \ZZ^n\right)} \quad \quad \quad \quad (\xi \in \RR). $$
Recall the definition  of $\cN(\theta)$, for a unit vector $\theta \in S^{n-1}$.

\begin{lemma} For any $\xi_1, \xi_2 \in \RR$,
\begin{equation}
 S(\xi_1 + \xi_2) \leq S(\xi_1) + S(\xi_2). \label{eq_912}
 \end{equation}
Furthermore, denote $R = \cN(\theta) \geq 1$.
Then, for any $|\xi| \leq n / (R \delta^4)$,
\begin{equation}
 S(\xi) \geq c \min \left \{ |\xi|, \frac{n / (R \delta^4)}{|\xi|} \right \},
\label{eq_26}
\end{equation}
where $c > 0$ is a universal constant.
\label{lem_1055}
\end{lemma}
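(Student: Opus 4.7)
The subadditivity \eqref{eq_912} is an instance of Minkowski's inequality. Since $\ZZ^n$ is closed under addition, the distance $d(\cdot,\ZZ^n)$ satisfies the triangle inequality $d(x+y,\ZZ^n)\le d(x,\ZZ^n)+d(y,\ZZ^n)$ for all $x,y\in\RR^n$. Applying this pointwise with $x=\xi_1 Y\theta/(2\pi)$ and $y=\xi_2 Y\theta/(2\pi)$ (using the same realisation of $Y$), and then taking $L^2(\PP)$-norms, the triangle inequality in $L^2$ yields $S(\xi_1+\xi_2)\le S(\xi_1)+S(\xi_2)$.

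For the lower bound \eqref{eq_26}, write $u=\xi/(2\pi)$, $M=n/R$, and $\eta=uY$, and apply property (iii) in the definition of $\cN(\theta)=R$: on $\{|\eta|\le n\}$,
\[
d^2(\eta\theta,\ZZ^n)\;\ge\;\tfrac{1}{100}\min\{\eta^2,\,M^2/\eta^2\}.
\]
The key truncation is to the event $F=\{Y^2\le 4\pi^2\delta^4\}$. Since $Y=X-\widetilde X$ satisfies $\EE Y^2=2$ and $\EE Y^4\le 16\delta^4$, Chebyshev gives $\EE Y^2\mathbf{1}_{F^c}\le \EE Y^4/(4\pi^2\delta^4)\le 4/\pi^2$, hence $\EE Y^2\mathbf{1}_F\ge 2-4/\pi^2\ge 8/5$. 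A brief verification using $|\xi|\le n/(R\delta^4)$ and $R\ge 1$ confirms that $|\eta|\le n$ throughout $F$, so (iii) applies there.

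Decompose $F=E_1\sqcup E_2$ with $E_1=\{Y^2\le\min(M/u^2,\,4\pi^2\delta^4)\}$ (where $\eta^2\le M$, so $\min\{\eta^2,M^2/\eta^2\}=\eta^2$) and $E_2=\{M/u^2<Y^2\le 4\pi^2\delta^4\}$ (where $\eta^2>M$, so $\min=M^2/\eta^2\ge M^2/(4\pi^2\delta^4 u^2)$). Setting $A=\EE Y^2\mathbf{1}_{E_1}$ and $B=\EE Y^2\mathbf{1}_{E_2}$, we have $A+B\ge 8/5$, and
\[
100\,S(\xi)^2\;\ge\;u^2 A\,+\,\frac{M^2}{u^2}\,\EE Y^{-2}\mathbf{1}_{E_2}\;\ge\;u^2 A\,+\,\frac{M^2 B}{16\pi^4\delta^8 u^2},
\]
where the last step uses $Y^{-2}\ge 1/(4\pi^2\delta^4)$ on $E_2$ and the tautology $\PP(E_2)\ge B/(4\pi^2\delta^4)$ (since $Y^2\le 4\pi^2\delta^4$ there). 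Because $\max(A,B)\ge 4/5$, the right-hand side dominates a universal constant times $\min\{u^2,\,M^2/(\delta^8 u^2)\}$, which is comparable to $\min\{\xi^2,\,M^2/(\delta^8\xi^2)\}$; taking square roots yields \eqref{eq_26}.

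The main point requiring care is the calibration of the truncation threshold $4\pi^2\delta^4$: it retains at least $80\%$ of $\EE Y^2$ by Chebyshev on $\EE Y^4$, while simultaneously forcing $Y^{-2}\ge 1/(4\pi^2\delta^4)$ on $F$. The two factors of $\delta^{-4}$—one from the pointwise lower bound on $Y^{-2}$, one from the Paley--Zygmund-type estimate $\PP(E_2)\ge B/(4\pi^2\delta^4)$—combine to produce the $\delta^{-8}$ scaling appearing in the target.
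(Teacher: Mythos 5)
Your argument is correct and follows the paper's route: the subadditivity is the same triangle-inequality-plus-Minkowski observation, and the lower bound is obtained by the same three ingredients — truncation of $Y$ at scale $\delta^2$, a Chebyshev/second-moment estimate showing the truncation retains most of $\EE Y^2$, and an application of condition (iii) from the definition of $\cN(\theta)$. The only difference is cosmetic: where the paper factors a deterministic quantity $\min^2\bigl\{1,\,(n/R)/(\delta^4\xi^2)\bigr\}$ out of the expectation by noting that the truncation $|\tilde Y|\le 5\delta^2$ makes $\min\{1,(n/R)/\eta^2\}$ uniformly bounded below, you instead split the truncated event into $E_1$ (where $\eta^2$ attains the min) and $E_2$ (where $M^2/\eta^2$ does) and estimate the two contributions separately with a Paley--Zygmund-type step; both calculations produce the same bound.

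One small numerical slip that affects only the constant: $\EE Y^4 = 2\delta^4+6\le 8\delta^4$, not $16\delta^4$. With the correct value, Chebyshev gives $\EE Y^2\mathbf{1}_{F^c}\le 2/\pi^2$ and hence $\EE Y^2\mathbf{1}_F\ge 2-2/\pi^2 > 8/5$; with your stated $16\delta^4$, the lower bound $2-4/\pi^2\approx 1.595$ is in fact just below $8/5=1.6$. This is trivially repaired and does not change the structure of the proof.
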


\begin{proof} Note that for any $x, y \in \RR^n$,
$$ d(x + y, \ZZ^n) \leq d(x, \ZZ^n) + d(y, \ZZ^n). $$
The inequality (\ref{eq_912}) thus follows from the Cauchy-Schwartz inequality. Let us move to the proof of (\ref{eq_26}). From the definition of $\cN(\theta)$,
\begin{equation}
 d(\xi \theta, \ZZ^n) \geq \frac{1}{10} \min \left \{ |\xi|, \frac{n / R}{ |\xi|} \right \} = \frac{|\xi|}{10} \min \left \{ 1, \frac{n/R}{\xi^2} \right \} \ \ \ \ \ \ \text{for all} \ |\xi| \leq n. \label{eq_1037}
\end{equation}
Since $Y = X - \tilde{X}$ then $\EE Y^2 = 2$ and $\EE Y^4 =
2\delta^4 + 6 \leq 8 \delta^4$. Denote $\tilde{Y} = Y 1_{|Y| \leq 5 \delta^2}$. According to Lemma \ref{lem_859}(ii), $$ \EE \tilde{Y}^2 \geq 4/5. $$
For any $|\xi| \leq n / \delta^4$ we have $|\tilde{Y} \xi| / (2 \pi) \leq \delta^2 \xi \leq n$ and (\ref{eq_1037}) yields
\begin{eqnarray*}
 \lefteqn{ \EE d^2 \left( \frac{\xi Y}{2 \pi} \theta, \ZZ^n\right) \geq \EE d^2 \left( \frac{\xi \tilde{Y}}{2 \pi} \theta, \ZZ^n\right)   \geq
\frac{1}{40 \pi^2} \EE \left( \xi \tilde{Y} \min \left \{ 1, \frac{n / R}{|\tilde{Y} \xi|^2 / (4 \pi^2)} \right \} \right)^2 } \\ & \geq &
 \frac{1}{40 \pi^2}  \xi^2 \min \left \{1, \frac{n^2 / R^2}{ \delta^8 \xi^4} \right \} \EE \tilde{Y}^2 \geq \frac{1}{800}
\min \left \{\xi^2, \frac{n^2 / R^2}{ \delta^8 \xi^2} \right \}. \phantom{paul is dead}
\end{eqnarray*}
Therefore (\ref{eq_26}) holds  for all $|\xi| \leq n / \delta^4$.
\end{proof}

\begin{lemma} Let $0 < \alpha < 1, T \geq 1$ and suppose that $f: \RR \rightarrow [0, \infty)$ is an even, measurable function
which satisfies
\begin{equation}
 f(\xi_1 + \xi_2) \leq f(\xi_1) + f(\xi_2) \quad \quad (\xi_1, \xi_2 \in \RR) \label{eq_1153}
\end{equation}
and
\begin{equation}
 f(\xi) \geq \alpha \min \left \{ |\xi|, \frac{T}{|\xi|} \right \}
\quad \quad \text{for any} \ |\xi| \leq T.
\label{eq_1516}
\end{equation}
Then,
$$ \int_{T^{1/6} \leq |\xi| \leq T} \exp \left \{ -f^2(\xi) \right \} \frac{d\xi}{|\xi|} \leq \frac{C }{\alpha^{6} T} $$
where $C > 0$ is a universal constant.
\label{lem_1103}
\end{lemma}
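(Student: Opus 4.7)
My plan is to split the range $[T^{1/6}, T]$ at $\sqrt T$ and estimate each piece (the negative range follows by evenness of $f$). On $[T^{1/6}, \sqrt T]$ only the pointwise bound (\ref{eq_1516}) is needed: since $f(\xi) \geq \alpha \xi$ there, the substitution $u = \alpha\xi$ gives
\[ \int_{T^{1/6}}^{\sqrt T} e^{-f^2(\xi)} \frac{d\xi}{\xi} \leq \int_B^{\alpha\sqrt T} e^{-u^2}\frac{du}{u}, \qquad B := \alpha T^{1/6}. \]
Since $B^6 = \alpha^6 T$, a two-case analysis (for $B \geq 1$ use $\int_B^\infty e^{-u^2} du/u \leq e^{-B^2}/(2B^2)$ with $x^4 e^{-x^2}$ bounded; for $B \leq 1$ split at $u = 1$ to obtain $\log(1/B) + O(1) \leq C/B^6$ on $(0,1]$) gives a bound of $C/(\alpha^6 T)$.

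On $[\sqrt T, T]$ I will combine (\ref{eq_1516}) with the subadditivity (\ref{eq_1153}). Let $E_s := \{\xi \in \RR : f(\xi) \leq s\}$. Since $f(\xi) \geq \alpha T/\xi$ on $[\sqrt T, T]$ by (\ref{eq_1516}), for $s \leq \alpha \sqrt T$ one has $E_s \cap [\sqrt T, T] \subseteq [\alpha T/s, T]$, so $1/\xi \leq s/(\alpha T)$ on this set. Further, (\ref{eq_1153}) and evenness give $E_s - E_s \subseteq E_{2s}$, and (\ref{eq_1516}) then forces the dichotomy $|\xi_1 - \xi_2| \leq 2s/\alpha$ or $|\xi_1 - \xi_2| \geq \alpha T/(2s)$ for $\xi_1, \xi_2 \in E_s$ with $|\xi_1 - \xi_2| \leq T$. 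Consequently, for $s \leq \alpha\sqrt T/2$, the set $E_s \cap [\sqrt T, T]$ is a union of at most $2s/\alpha + 1$ clusters of diameter $\leq 2s/\alpha$, so its Lebesgue measure is at most $4s^2/\alpha^2 + 2s/\alpha$. Multiplying by the pointwise bound on $1/\xi$,
\[ \int_{E_s \cap [\sqrt T, T]} \frac{d\xi}{\xi} \leq \frac{s}{\alpha T}\Bigl(\frac{4s^2}{\alpha^2} + \frac{2s}{\alpha}\Bigr) = \frac{4s^3}{\alpha^3 T} + \frac{2s^2}{\alpha^2 T}, \qquad s \leq \alpha\sqrt T/2. \]

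The layer-cake formula $e^{-f^2(\xi)} = \int_0^\infty 2s\, e^{-s^2}\, 1_{\{f(\xi) \leq s\}}\, ds$, with the $s$-integral split at $\alpha\sqrt T/2$ and the trivial bound $\int_{\sqrt T}^T d\xi/\xi = (\log T)/2$ applied for $s > \alpha\sqrt T/2$, then yields
\[ \int_{\sqrt T}^T e^{-f^2(\xi)}\frac{d\xi}{\xi} \leq \int_0^\infty 2s e^{-s^2} \Bigl(\frac{4s^3}{\alpha^3 T} + \frac{2s^2}{\alpha^2 T}\Bigr) ds + \frac{\log T}{2} e^{-\alpha^2 T/4}. \]
The first integral equals $3\sqrt\pi/(\alpha^3 T) + 2/(\alpha^2 T) \leq C/(\alpha^3 T) \leq C/(\alpha^6 T)$ since $\alpha < 1$. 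The tail is bounded via $e^y \geq y^3/6$ by $C \log T/(\alpha^6 T^3) \leq C'/(\alpha^6 T)$ using $\log T/T^2 \leq 1/(2e)$ for $T \geq 1$. Adding the first-piece bound and doubling for evenness completes the proof. The main obstacle is to combine the two lower bounds on $f$: subadditivity alone supplies only a Lebesgue estimate on $E_s$, and without the pointwise upgrade $1/\xi \leq s/(\alpha T)$ on $E_s$ one obtains only $C/(\alpha^2 \sqrt T)$ from the naive $1/\xi \leq 1/\sqrt T$, which fails to match $C/(\alpha^6 T)$ once $\alpha^8 T > 1$.
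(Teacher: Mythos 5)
Your proof is correct and follows essentially the same strategy as the paper: split the range at $\sqrt{T}$, handle $[T^{1/6},\sqrt{T}]$ by the pointwise bound $f(\xi)\geq\alpha|\xi|$, and on $[\sqrt{T},T]$ combine the subadditivity-driven interval covering of a level set with the observation that small values of $f$ force $\xi\geq\alpha T/O(s)$ so that $1/\xi$ is also small. The only difference is packaging: the paper integrates over annular level sets $\{r\leq f<2r\}$ via a dyadic Fubini decomposition, whereas you use the sub-level sets $\{f\leq s\}$ and the layer-cake formula, which is equivalent.
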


\begin{proof}
Fix $r > 0$. Denote
$$ A_{r} = \{ T^{1/2} \leq \xi \leq T ; r \leq f(\xi) < 2 r \}. $$ Then  $A_{r}
\subset [\alpha T / (2 r), T]$, thanks to (\ref{eq_1516}). Furthermore, let $\xi_1, \xi_2 \in
A_{r}$. We learn from (\ref{eq_1153}) and from the fact that $f$ is even that
$f(\xi_1 - \xi_2) \leq f(\xi_1) + f(\xi_2) \leq 4 r$. According to (\ref{eq_1516}),
either $|\xi_1 - \xi_2| \leq 4 r / \alpha$, or else
$$ |\xi_1 - \xi_2| \geq \alpha T  / f (\xi_1 - \xi_2) \geq \alpha T / (4 r).
$$
Therefore, the  set $A_{r}$ can be covered by closed intervals of length at most
$4 r / \alpha$, and the distance between two such intervals is at least
$\alpha T / (4 r)$. For this specific purpose, the distance between two closed intervals
means the distance between their left-most points.
Since $A_r \subset [\alpha T / (2 r), T]$ then the number of such intervals
is at most $4 r / \alpha + 1$.  Consequently, for any $r > 0$,
\begin{equation}
 \int_{A_{r}} \exp \left ( - f^2(\xi) \right )
\frac{d\xi}{\xi} \leq \left( \frac{4 r}{\alpha} +1 \right) \cdot \frac{4 r }{\alpha} \cdot \exp( -r^2) \cdot \frac{2r}{\alpha T} =  \frac{8 r^2 (4 r + \alpha)}{\alpha^3 T} \exp( -r^2). \label{eq_958_}
\end{equation}
 From Fubini's theorem,
$$ \int_{\sqrt{T} \leq |\xi| \leq T} e^{ - f^2(\xi) } \frac{d\xi}{|\xi|} = 2 \sum_{i=-\infty}^{\infty} \int_{A_{2^i}} e^{ - f^2(\xi) } \frac{d\xi}{|\xi|}  \leq 4  \int_0^{\infty} \left(
\int_{A_r}  e^{ - f^2(\xi) } \frac{d\xi}{|\xi|}  \right) \frac{dr}{r}.
$$
We plug in the information from (\ref{eq_958_}) to obtain the bound
\begin{equation}
 \int_{\sqrt{T} \leq |\xi| \leq T} \exp \left \{  - f^2(\xi) \right \} \frac{d\xi}{|\xi|}  \leq C \int_0^{\infty}
\frac{r^2 (r + \alpha)}{\alpha^3 T} \exp \{ - r^2 \} \frac{dr}{r} \leq \frac{\tilde{C}}{ \alpha^{3} T}.  \label{eq_1059}
\end{equation}
Additionally, according to (\ref{eq_1516}),
\begin{equation}
 \int_{T^{1/6} \leq |\xi| \leq T^{1/2}} e^{ - f^2(\xi) } \frac{d\xi}{|\xi|}
\leq  2 \int_{T^{1/6}}^{\infty} e^{-\alpha^2 \xi^2 } \frac{d \xi}{|\xi|} \leq  \frac{C}{\alpha T^{1/6}} e^{-\alpha^2 T^{1/3}} \leq \frac{\bar{C}}{\alpha^{6} T}.
\label{eq_1226}
\end{equation}
The lemma follows from (\ref{eq_1059}) and (\ref{eq_1226}).
\end{proof}

\begin{proof}[Proof of Theorem~ \ref{thm_1328}] Set $R = \cN(\theta)$. We assume that $R \delta^4 \leq n$, since otherwise the conclusion of the theorem is vacuous. According to Lemma~\ref{lem_dom1},
\[ \int_{-[n / (R \delta^4)]^{1/6}}^{[n / (R \delta^4)]^{1/6}}
    \left| \vphi_{\theta}(\xi) - e^{-\xi^2/2} \right| \frac{d \xi}{|\xi|}
    \leq \frac{C R}{n} \left[ \gamma^3 + \delta^4 \right] \leq \frac{\tilde{C} R \delta^4}{n}~, \]
for $\gamma^3 = \EE X^3 \leq \delta^2$. It still remains to bound the integral for
$[n / (R \delta^4)]^{1/6} \leq |\xi| \leq n / (R \delta^4)$.
 To that end, we use Lemma~\ref{lem_1101}, which states that,
$$ |\vphi_{\theta}(\xi)| \leq \exp \left \{ -4 S^2(\xi) \right \} \quad \quad (\xi \in \RR). $$
According to Lemma \ref{lem_1055}, we may apply Lemma \ref{lem_1103} for the function $S(\xi)$, with
$T = n / (R \delta^4)$ and with $\alpha$ being a universal constant. We deduce that
   \begin{eqnarray*} \lefteqn{ \int_{[n / (R \delta^4)]^{1/6} \leq |\xi| \leq n / (R \delta^4)}
      \left| \vphi_{\theta}(\xi) - e^{-\xi^2/2} \right| \frac{d
    \xi}{|\xi|}}  \\ & \leq & C \exp \left \{ -c \left[ n / (R \delta^4) \right]^{1/3} \right \} +
    \int_{[n / (R \delta^4)]^{1/6} \leq |\xi| \leq n / (R \delta^4)} \exp \left\{ -4
S^2(\xi) \right\}
\frac{d\xi}{|\xi|}  \leq \frac{\tilde{C}  \delta^4 R}{n}.
\end{eqnarray*}
The theorem now follows from (\ref{eq_958}).
\end{proof}

Let us verify that $\cN(\theta^0) \leq C$ for a universal constant
$C > 0$, where $\theta^0$ is the unit vector defined in (\ref{eq_1412_}).

\begin{lemma} For any $\xi \in \RR$,
$$
 d(\xi \theta^0, \ZZ^n) \geq \min \left \{ |\xi|,  \frac{c n}{|\xi|} \right \}, $$
where $c >
0$ is a universal constant. \label{l:sqrt2}
\end{lemma}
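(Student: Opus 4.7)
The plan is to exploit the explicit structure of $\theta^0$. Set $a = 1/\sqrt{3n/2}$ and $b = \sqrt{2}\,a$; every coordinate of $\theta^0$ equals $\pm a$ or $\pm b$, with exactly $n/2$ coordinates of each kind. Since $d(\cdot, \ZZ)$ is an even function, sign flips are irrelevant and
\[
    d^2(\xi\theta^0, \ZZ^n) \;=\; \frac{n}{2}\bigl[\, d^2(\xi a, \ZZ) + d^2(\xi a\sqrt{2}, \ZZ) \,\bigr].
\]
Writing $u = \xi a$, so that $u^2 = 2\xi^2/(3n)$, the task reduces to a lower bound on $d^2(u, \ZZ) + d^2(u\sqrt{2}, \ZZ)$ for $u \in \RR$.

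First I would dispose of the ``small $\xi$'' regime $|\xi| \leq \sqrt{3n}/4$, equivalently $|u| \leq \sqrt{2}/4$. In this range each $|\xi\theta^0_j| \leq 1/2$, so $d(\xi\theta^0_j, \ZZ) = |\xi\theta^0_j|$, and summing over coordinates gives $d(\xi\theta^0, \ZZ^n) = |\xi|$ exactly. The remaining case $|u| > \sqrt{2}/4$ is the interesting one; here I want to prove $d^2(u, \ZZ) + d^2(u\sqrt{2}, \ZZ) \geq c_0/u^2$, and rescaling will convert this into the bound $cn/|\xi|$ predicted by the lemma.

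The arithmetic input is the elementary Diophantine estimate
\[
    d(p\sqrt{2}, \ZZ) \;\geq\; \frac{1}{4|p|} \qquad \text{for every nonzero integer } p,
\]
derived from $|p\sqrt{2}-q|\cdot|p\sqrt{2}+q| = |2p^2-q^2| \geq 1$ combined with the rough bound $|p\sqrt{2}+q^*| \leq 4|p|$, where $q^*$ is the integer nearest to $p\sqrt{2}$. Let $p$ denote the nearest integer to $u$ and set $\eta = d(u, \ZZ) = |u-p|$. In the narrow band $\sqrt{2}/4 < |u| < 1/2$ we have $p=0$, so $d^2(u, \ZZ) = u^2 > 1/8$ while $1/u^2 < 8$, which yields the desired inequality with $c_0 = 1/64$. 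For $|u| \geq 1/2$ we have $p \neq 0$ and $|p| \leq |u|+1/2 \leq 2|u|$, and the triangle inequality gives
\[
    d(u\sqrt{2}, \ZZ) \;\geq\; d(p\sqrt{2}, \ZZ) - \eta\sqrt{2} \;\geq\; \frac{1}{4|p|} - \eta\sqrt{2}.
\]
A simple dichotomy on whether $\eta$ exceeds a small fixed multiple of $1/|p|$ shows that in either branch at least one of $d^2(u, \ZZ)$ and $d^2(u\sqrt{2}, \ZZ)$ is bounded below by a universal constant times $1/p^2 \geq 1/(4u^2)$, completing the bound.

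The main substantive step is the Diophantine plus triangle-inequality argument of the last paragraph; the rest amounts to bookkeeping to convert the bound on $d^2(u,\ZZ)+d^2(u\sqrt{2},\ZZ)$ back into $\min\{|\xi|, cn/|\xi|\}$ via the scaling $u = \xi\sqrt{2/(3n)}$, and to choose a universal $c$ that absorbs the various factors of $3$, $8$, and $\sqrt{2}$ arising in the two regimes.
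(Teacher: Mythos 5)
Your argument is correct and follows essentially the same route as the paper: both reduce the claim to a lower bound on $d^2(u,\ZZ) + d^2(u\sqrt 2,\ZZ)$, both rely on the fact that $\sqrt 2$ is badly approximable, and both use the triangle inequality to compare $d(u\sqrt 2,\ZZ)$ with $d(p\sqrt 2,\ZZ)$ for $p$ the nearest integer to $u$. The only cosmetic differences are that the paper invokes Liouville's theorem for the Diophantine input, where you derive $d(p\sqrt 2,\ZZ)\geq 1/(4|p|)$ directly from $|2p^2-q^2|\geq 1$, and that you make the case analysis and the $\eta$-dichotomy explicit where the paper compresses them into a single chain of inequalities.
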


\begin{proof} Liouville's theorem (see, e.g., \cite[Section II.6]{CRS}) states that for any integer $p, q
\neq 0$,
$$  \left|\sqrt{2} - \frac{q}{p} \right| \geq \frac{c_1}{p^2}, $$
for some universal constant $c_1 > 0$. Let $|\xi| > 1/2$ and suppose
that $p, q \in \ZZ$ are integers that satisfy $|\xi - p| = d(\xi,
\ZZ)$ and $|\xi \sqrt{2} - q| = d(\xi \sqrt{2}, \ZZ)$. Then,
\[ \frac{c}{|\xi|} \leq \frac{c_1}{ |p| } \leq \left|p \sqrt{2} - q\right| \leq |p \sqrt{2} - \xi \sqrt{2}| + |\xi \sqrt{2} - q|  = \sqrt{2} d(\xi, \ZZ) + d(\xi \sqrt{2}, \ZZ)~.  \]
We deduce that for any $\xi \in \RR$,
$$ d^2 \left( \xi , \ZZ \right) + d^2 \left( \xi \sqrt{2} , \ZZ
\right) \geq \min \{3  \xi^2, \tilde{c} \xi^{-2} \}. $$ According to the
definition of the unit vector $\theta^0$, and see that for $\xi \in \RR$,
$$
 d^2(\xi \theta^0, \ZZ^n) =  \frac{n}{2} \left[ d^2 \left(
\sqrt{\frac{2}{3 n}} \xi , \ZZ \right) + d^2 \left( \sqrt{\frac{2}{3
n}} \xi \sqrt{2}, \ZZ \right) \right]  \geq \min \{ \xi^2, c n^2 /
\xi^2 \}~.
$$

\end{proof}

{\small

\bigskip
{\small \noindent  School of Mathematical Sciences, Tel-Aviv
University, Tel-Aviv 69978, Israel

{\small \noindent \it e-mail address:} {\small
\verb"[klartagb,sodinale]@tau.ac.il"}

\end{document}